\newtheorem{Thm}{Theorem}[section]
\newtheorem{Lem}[Thm]{Lemma}
\newtheorem{Prop}[Thm]{Proposition}
\numberwithin{equation}{section}
\newcommand{\R}{\mathbb{R}}
\newcommand{\C}{\mathbb{C}}
\newcommand{\N}{\mathbb{N}}
\newcommand{\cC}{{\mathcal C}}
\newcommand{\cF}{{\mathcal F}}
\newcommand{\cL}{{\mathcal L}}
\newcommand{\cO}{{\mathcal O}}
\newcommand{\al}{\alpha}
\newcommand{\be}{\beta}
\newcommand{\ga}{\gamma}
\newcommand{\de}{\delta}
\newcommand{\la}{\lambda}
\newcommand{\ze}{\zeta}
\newcommand{\Ga}{\Gamma}
\newcommand{\Om}{\Omega}
\newcommand{\om}{\omega}
\newcommand{\vphi}{\varphi}
\newcommand{\eps}{\varepsilon}
\newcommand{\pa}{\partial}
\newcommand{\dist}{\text{\rm dist}}
\newcommand{\supp}{\text{\rm supp}}
\newcommand{\cat}{\text{\rm cat}}
\newcommand{\sign}{\text{\rm sign\,}}
\newcommand{\ham}{H_{KR}}
\newcommand{\conf}{\cF_N\Omega}
\newcommand{\abs}[1]{\lvert#1\rvert}
\newcommand\mytop[2]{\genfrac{}{}{0pt}{}{#1}{#2}}
\newcommand{\vlist}[2]{\genfrac{}{}{0pt}{1}{#1}{#2}}
\newcommand{\beq[1]}{\begin{equation}\label{eq:#1}}
\newcommand{\eeq}{\end{equation}}
\newenvironment{altproof}[1]
{\noindent
{\em Proof of {#1}}.}
{\nopagebreak\mbox{}\hfill $\Box$\par\addvspace{0.5cm}}
\begin{document}

\title{Critical points of the $N$-vortex Hamiltonian in bounded planar domains and steady state solutions of the incompressible Euler equations}

\author{\sc{Thomas Bartsch}\footnote{T.B. thanks the
Universit\'a di Roma ``La Sapienza'' for the invitation and the hospitality
during several visits.}\ \ \footnote{Supported by DAAD grant 50766047.}
\and
\sc{Angela Pistoia}\footnote{Supported by the M.I.U.R.
National Project ``Metodi variazionali e topologici nello studio di
fenomeni non lineari''.}
}

\date{}
\maketitle
\begin{abstract}
 We prove the existence of critical points of the $N$-vortex Hamiltonian
 \[
 \ham (x_1,\ldots, x_N)
 =\sum^N_{i=1}\Ga^2_i h(x_i)
    + \sum_{\mytop{i,j=1}{j\ne k}}^N \Ga_i\Ga_jG(x_i,x_j)+2\sum\limits_{i=1}^N\Ga_i\psi_0(x_i)
 \]
 in a bounded domain $\Om\subset\R^2$ which may be simply or multiply connected. Here $G$ denotes the Green function for the Dirichlet Laplace operator in $\Om$, more generally a hydrodynamic Green function, and $h$ the Robin function. Moreover $\psi_0\in\cC^1(\overline\Om)$ is a harmonic function on $\Om$. The domain need not be simply connected. We obtain new critical points $x=(x_1,\dots,x_N)$ for $N=3$ or $N=4$ under conditions on the vorticities $\Ga_i\in\R\setminus\{0\}$. These critical points correspond to point vortex equilibria of the Euler equation in vorticity form. The case $\Ga_i=(-1)^i$ of counter-rotating vortices with identical vortex strength is included. The point vortex equilibria can be desingularized to obtain smooth steady state solutions of the Euler equations for an ideal fluid. The velocity of these steady states will be irrotational except for $N$ vorticFity blobs near $x_1,\dots,x_N$.
\end{abstract}

{\bf Keywords}: vortex dynamics, point vortices, counter-rotating vortices, steady states of the Euler flow

{\bf  AMS subject classification}: 35J60, 35J25, 37J45, 76B47.


%
\section{Introduction}\label{sec:intro}
The dynamics of $N$ point-vortices $x_1, \ldots, x_N\in\Om$ in a bounded domain $\Om\subset\R^2$ in the plane is governed by a Hamiltonian system
\begin{equation}\label{HS}
 \left\{
  \begin{aligned}
   \Ga_i\frac{d x_{i1}}{dt}
     &=\frac{\partial \ham}{\partial x_{i,2}}(x_1,\ldots, x_N);\\
   \Ga_i\frac{dx_{i2}}{dt}
     &=-\frac{\partial \ham}{\partial x_{i1}}(x_1,\ldots, x_N);
\end{aligned}
\hspace{2cm} i=1,\ldots, N.
\right.
\end{equation}
Here $\Ga_i\in\R\setminus\{0\}$ denotes the strength of the $i$-th vortex $x_i$, the sign determining the orientation of the vortex. The Hamiltonian is given by the Kirchhoff-Routh path function
\begin{equation}\label{H_Omega}
\ham(x_1,\ldots, x_N)
 =\sum_{i=1}^N\Ga^2_i h(x_i)
    + \sum_{\mytop{i,j=1}{i\ne j}}^N \Ga_i\Ga_jG(x_i,x_j) +2\sum\limits_{i=1}^N\Ga_i\psi_0(x_i)
\end{equation}
where
\[
G(x,y)=g(x,y) - \frac{1}{2\pi}\log|x-y|
\]
is the Green function of the Dirichlet Laplacian in $\Om$. Here $g:\Om\times\Om\to\R$ is the regular part, and $h:\Om\to\R$, $h(x)=g(x,x)$, denotes the Robin function. Moreover $\psi_0\in\cC^1(\overline\Om)$ is a harmonic function on $\Om$ modeling the boundary flux. In case of a solid boundary one has $\psi_0=0$.
$\ham$ is defined on the configuration space
\[
\conf
 = \left\{(x_1,\ldots,x_N)\in\Om^N:
           x_i\neq x_j\ \text{for }i\neq j\right\}.
\]
The domain $\Om$ need neither be simply connected nor symmetric. More generally, $G$ can be a hydrodynamic Green function (see \cite{flucher-gustafsson:1997}), or even a function having certain properties of Green functions.

Based on first ideas of Helmholtz \cite{helmholtz:1858} about vortices, the system has been deduced by Kirchhoff \cite{kirchhoff:1876}, Routh \cite{routh:1881}, and Lin \cite{lin:1941a,lin:1941b} from the Euler equations
\beq[euler]
\left\{
\begin{aligned}
v_t+(v\cdot\nabla)v&=-\nabla P\\
\nabla\cdot v&=0
\end{aligned}
\right.
\eeq
for an incompressible and non-viscous fluid in $\Om$. Here $v$ denotes the velocity field and $P$ the pressure of the fluid. The scalar vorticity
$\om = \nabla\times v = \pa_1v_2-\pa_2v_1$ satisfies the equation
\beq[euler-vort]
\om_t+v\cdot\nabla\om=0.
\eeq
The point vortex ansatz $\om=\sum_{k=1}^{N}\Ga_k\de_{x_k}$, where $\de_{x_k}$ is the usual Dirac delta, leads to \eqref{HS} for the point vortices $x_k(t)$. We refer to \cite{flucher-gustafsson:1997, majda-bertozzi:2001, marchioro-pulvirenti:1994, newton:2001, saffman:1992} for modern treatments of vorticity methods.

There are many results about point vortex dynamics if $\Om=\R^2$ is the plane, or if $\Om$ is a special domain like the disc, the half-disc, an annulus, an infinite strip. In these cases the Green function, hence the Hamiltonian, is either explicitely known or one has good representations of it. There are also many results of numerical nature, due to the multiple applications of point vortex methods in science and engineering. We just refer to the surveys \cite{aref:2008,aref-etal:2002,newton:2001} and the literature cited therein.

In this paper we present new conditions on the vortex strengths $\Ga_i$ such that $\ham$ has a critical point. Our results extend considerably earlier ones from
\cite{bartolucci-pistoia:2007,bartsch-pistoia-weth:2010,esposito-etal:2006}
where only special cases have been treated, all dealing with $\Ga_i\in\{\pm 1\}$ and $\psi_0=0$. Observe that $\conf\subset\Om^N$ is an open bounded subset of $\R^{2N}$, and that $\ham$ is singular and not bounded from above nor below. Therefore the existence of critical points is highly nontrivial, in particular since we require no symmetry nor any geometrical or topological properties of the domain. Our results hold for functions $F:\conf\to\R$ which are $\cC^1$-close to $\ham$ on certain compact subsets of $\conf$. This allows to apply the methods from Cao, Liu and Wei \cite{cao-liu-wei:2013,cao-liu-wei:2014} on the desingularization of stationary point vortex solutions and to obtain stationary solutions of the Euler equations \eqref{eq:euler}, \eqref{eq:euler-vort}. This is done by constructing families $\psi_\eps$ of stream functions with vortex blobs which converge as $\eps\to0$ towards the stationary point vortices we construct. The velocity $v$ will be irrotational outside these vortex blobs. 

The paper is organized as follows. First, in Section~\ref{sec:results} we state our main results Theorems~\ref{thm:N=2} to \ref{thm:N=4} about the existence of critical points of {\it Hamiltonians of the $N$-vortex type}, and we state in Theorem~\ref{thm:euler} our results about solutions of the incompressible Euler equations. Next, in Section~\ref{sec:com} we prove a compactness result for the class of Hamiltonians we consider. This is very technical but in a sense the core of our paper.  Section~\ref{sec:proofs} contains the proofs of Theorems~\ref{thm:N=2} to \ref{thm:N=4}. Finally in Section~\ref{sec:euler} we desingularize the stationary point vortex solutions by proving Theorem~\ref{thm:euler}.

\section{Statement of results}\label{sec:results}

Let $\Om\subset\R^2$ be a bounded domain with $\cC^2$-boundary. We fix $\eps_0>0$ small so that the reflection at $\pa\Om$ is well defined in
$\Om_0:=\{x\in\Om:\dist(x,\pa\Om) < \eps_0\}$ and maps to the complement of $\Om$; we denote it by
$\Om_0 \to \R^2\setminus\overline\Om$, \ $x\mapsto\bar x$. It is of class $\cC^1$ since $\pa\Om$ is of class $\cC^2$. We write
\[
p:\Om_0\to\pa\Om,\quad p(x)=\frac12(x+\bar x),
\]
for the orthogonal projection onto the boundary, and
\[
\nu:\Om_0\to\R^2,\quad \nu(x)=\frac{1}{|x-\bar x|}(x-\bar x),
\]
for the interior normal; more precisely, $\nu(x)$ is the interior unit normal at $p(x)\in\pa\Om$ for $x\in\Om_0$. Clearly, $p(x)=x-\dist(x,\pa\Om)\nu(x)$ and
$\bar x=x-2\dist(x,\pa\Om)\nu(x)$.

Let $N\ge2$ and $\Ga_1,\dots,\Ga_N\in\R\setminus\{0\}$ be given. We consider a {\it Hamiltonian of the $N$-vortex type}, i.~e.\ a function $H:\conf\to\R$ of the form
\beq[H]
H(x)
 = \sum^N_{i=1}\Ga^2_i h(x_i) + \sum_{\mytop{i,j=1}{j\ne k}}^N\Ga_i\Ga_jG(x_i,x_j) + f(x)
\eeq
where $f\in\cC^1(\overline\Om^N)$ and
\beq[green]
G(x,y)=g(x,y) - \frac{1}{2\pi}\log|x-y|
\eeq
is a {\it generalized Green's function} by which we mean that the following properties hold.

\begin{itemize}
\item[(A1)] $G$ is bounded from below and symmetric, i.~e.\ $G(x,y)=G(y,x)$.
\item[(A2)] $g:\Om\times\Om\to\R$ is a $\cC^1$-function, bounded from above, and $h(x)=g(x,x) \to -\infty$ as $\dist(x,\pa\Om)\to0$.
\item[(A3)] For every $\eps>0$ there is a constant $C_1=C_1(\Om,\eps)>0$ such that
\[
|h(x)|+|\nabla h(x)| \le C_1 \qquad
\text{for every $x\in\Om$ with $\dist(x,\pa\Om)\ge\eps$}
\]
and
\[
|G(x,y)|+|\nabla_x G(x,y)|+|\nabla_y G(x,y)| \le C_1
\quad \text{for every $x,y \in \Om$ with $|x-y|\ge\eps$.}
\]
\item[(A4)] There exists a constant $C_2=C_2(\Om)>0$ such that
$\psi(x,y) := g(x,y)-\frac{1}{2\pi}\log|\bar x-y|$ satisfies
\[
|\psi(x,y)| + |\nabla_x \psi(x,y)| + |\nabla_y \psi(x,y)| \leq C_2
\qquad\text{for every $x,y\in\Om_0$.}
\]
\end{itemize}

It is well known that these assumptions hold for the Dirichlet Green's function, more generally for a hydrodynamic Green's function (see \cite{flucher-gustafsson:1997} for the definition); details can be found in \cite{bartsch-pistoia-weth:2010, kuhl:2014a}. Our first theorem deals with a rather simple case.

\begin{Thm}\label{thm:N=2}
  Suppose $N=2$ and $\Ga_1\Ga_2<0$. There exists a compact subset $K\subset\cF_2(\Om)$ and $\de>0$ such that the following holds:
  \begin{itemize}
  \item[a)] Any $\cC^1$-function $F:\cF_2(\Om)\to\R$ with $\|F|_K-H|_K\|_\infty<\de$ has at least $\cat(\cF_2(\Om))$ critical points $(x^i_1, x^i_2)$, $i=1,\dots,\cat(\cF_2(\Om))$ in $K$.
  \item[b)] If $\Ga_1=-\Ga_2$ and if $F$ is symmetric, i.~e.\ $F(x,y)=F(y,x)$, then $F$ has at least $k:=\cat(\cF_2(\Om)/(x_1,x_2)\sim(x_2,x_1))$ pairs $(x^i_1, x^i_2)$, $(x^i_2, x^i_1)$ of critical points in $K$, $i=1,\dots,k$.
  \item[c)] If $F_\eps:\cF_2(\Om)\to\R$ is a family of $\cC^1$-functions such that $\|F_\eps|_K-H|_K\|_{\cC^1} \to 0$ then the critical points $x_\eps$ obtained in a) or b) converge along a subsequence towards a critical point of $H$.
  \end{itemize}
\end{Thm}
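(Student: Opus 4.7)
The plan exploits the sign condition $\Gamma_1\Gamma_2<0$, which turns both singularities of $H$ into obstructions from above. As $|x_1-x_2|\to 0$ the coefficient of $-\log|x_1-x_2|$ in $\Gamma_1\Gamma_2G(x_1,x_2)$ has the good sign, sending this term to $-\infty$; as $\dist(x_i,\pa\Om)\to 0$ the Robin term $\Gamma_i^2h(x_i)$ tends to $-\infty$ by (A2). Combined with the upper bounds in (A2)--(A3) and the continuity of $f$ on $\overline\Om^2$, this shows that $H$ is bounded from above on $\cF_2(\Om)$ and that every superlevel set $\{H\ge c\}$ is a compact subset of $\cF_2(\Om)$.

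I would then pick a compactum $A_0\subset\cF_2(\Om)$ realizing $\cat_{\cF_2(\Om)}(A_0)=\cat(\cF_2(\Om))$ (available because this open $4$-manifold has finite LS-category, approximable by compact subsets), and fix levels $c_*>c_0>c_1$ together with $\delta>0$ satisfying $A_0\subset\{H\ge c_*\}$ and $\min(c_*-c_0,c_0-c_1)>4\delta$. Set $K:=\{H\ge c_0\}$ and $K_1:=\{H\ge c_1\}$, both compact, with $K\subset\opint(K_1)$. For any $\cC^1$-function $F$ with $\|F|_{K_1}-H|_{K_1}\|_\infty<\delta$, the sandwich
\[
A_0\subset\{H\ge c_*\}\subset\{F\ge c_*-\delta\}\cap K_1\subset\opint(K)\subset K\subset\{F\ge c_0-\delta\}\cap K_1\subset K_1
\]
holds. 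I would then run the classical Lusternik--Schnirelmann scheme for $F$ on $K_1$ with descending min-max levels
\[
c_k(F):=\sup\bigl\{\inf\nolimits_A F\,:\,A\subset K_1,\ \cat_{\cF_2(\Om)}(A)\ge k\bigr\},\qquad 1\le k\le\cat(\cF_2(\Om)),
\]
using an ascending pseudo-gradient flow of $F$ cut off near $\pa K_1$. The sandwich forces $c_k(F)\ge c_*-\delta$, which exceeds $c_1+\delta\ge\sup_{\pa K_1}F$; no critical point can leak to $\pa K_1$, and the standard deformation--category argument produces $\cat(\cF_2(\Om))$ distinct critical points of $F$ in $\opint(K)$, proving (a).

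For (b) the involution $\tau(x_1,x_2)=(x_2,x_1)$ acts freely on $\cF_2(\Om)$, so $\cF_2(\Om)/\tau$ is a manifold. Replacing $K$ and $K_1$ by their $\tau$-saturations $K\cup\tau(K)$ and $K_1\cup\tau(K_1)$ (still compact in $\cF_2(\Om)$, now $\mathbb{Z}_2$-invariant) and repeating the min-max in the quotient with the $\tau$-invariant function $F$ yields $\cat(\cF_2(\Om)/\tau)$ critical orbits, each lifting to a pair of critical points in $\cF_2(\Om)$. Part (c) is essentially automatic: every critical point produced in (a) or (b) lies in the fixed compactum $K\cup\tau(K)\subset\cF_2(\Om)$, so a subsequence converges to some $x_*\in\cF_2(\Om)$, and $\cC^1$-convergence of $F_\eps$ to $H$ on $K$ passes $\nabla F_\eps(x_\eps)=0$ to the limit $\nabla H(x_*)=0$.

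The main obstacle I anticipate is the quantitative input, uniform over a whole $C^0$-neighborhood of $H$: realizing $\cat(\cF_2(\Om))$ on a single compactum $A_0\subset\{H\ge c_*\}$, justifying that $c_1$ can be chosen a regular value of $H$ with the required sandwich gap, and ensuring that the cut-off pseudo-gradient flow of $F$ cannot create spurious stationary points near $\pa K_1$. This perturbation-stable compactness is precisely what the technical lemma proved in Section~\ref{sec:com} delivers.
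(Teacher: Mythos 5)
Your argument is correct and matches the paper's proof: both exploit that $\Ga_1\Ga_2<0$ forces $H\to-\infty$ at $\pa\cF_2(\Om)$, so superlevel sets are compact, and then run standard Lusternik--Schnirelmann theory on a compact superlevel set containing a compactum realizing $\cat(\cF_2(\Om))$, passing to the quotient by the swap involution for (b) and using compactness plus $\cC^1$-convergence for (c). The only inaccuracy is your closing remark: the compactness result of Section~\ref{sec:com} concerns $N\in\{3,4\}$ and plays no role here --- for $N=2$ the required perturbation-stable compactness is exactly the elementary fact that $\{H\ge c\}$ is compact.
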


Here $\cat$ denotes the Lusternik-Schnirelman category. The problem becomes considerably more difficult if $N>2$. We only deal with the cases $N=3$, $N=4$ and require the following assumption:

\begin{equation}\label{eq:hyp-1}
\begin{aligned}
&\Ga_i\Ga_{i+1}<0\ \text{for } i=1,\dots,N-1, \text{ and }\\
&\text{for every subset $I\subset\{1,\dots,N\}$ with $|I|\ge3$ there holds $\sum\limits_{i,j\in I,i\ne j} \Ga_i\Ga_j <0$.}
\end{aligned}
\end{equation}

\begin{Thm}\label{thm:N=3}
Let $N=3$ and assume \eqref{eq:hyp-1}. Then there exists a compact subset $K\subset\cF_3(\Om)$ and $\de>0$ such that the following holds:
  \begin{itemize}
  \item[a)]  Any $\cC^1$-function $F:\cF_3(\Om)\to\R$ with $\|F|_K-H|_K\|_{\cC^1}<\de$ has a critical point in $K$.
  \item[b)] If $F_\eps:\cF_2(\Om)\to\R$ is a family of $\cC^1$-functions such that $\|F_\eps|_K-H|_K\|_{\cC^1} \to 0$ then the critical points $x_\eps$ obtained in a) converge along a subsequence towards a critical point of $H$.
  \end{itemize}
\end{Thm}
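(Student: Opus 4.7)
The plan is to produce a critical point of $H$ by a saddle-type min-max, using the compactness theorem of Section~\ref{sec:com} to confine the argument to a compact $K\subset\cF_3(\Om)$, and then to transfer the construction to $\cC^1$-close perturbations. First I would analyse the blow-up behaviour of $H$ at the boundary of $\cF_3(\Om)$. Under \eqref{eq:hyp-1} with $N=3$ the signs of $\Ga_1,\Ga_2,\Ga_3$ alternate, so up to relabelling $\Ga_1\Ga_3>0$ is the only same-sign pair and $\Ga_1\Ga_2+\Ga_2\Ga_3+\Ga_1\Ga_3<0$. Using (A1)--(A4) one sees that $H\to-\infty$ along each of the following degenerations: (i) $x_i\to\pa\Om$ for some $i$, since $\Ga_i^2 h(x_i)\to-\infty$; (ii) $x_1\to x_2$ or $x_2\to x_3$, since $\Ga_i\Ga_{i+1}<0$ forces $\Ga_i\Ga_{i+1}G(x_i,x_{i+1})\to-\infty$; and (iii) a triple collision, since the total logarithmic coefficient $\sum_{i\ne j}\Ga_i\Ga_j$ is negative. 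The only direction in which $H\to+\infty$ is the same-sign collision $x_1\to x_3$ with $x_2$ bounded away from both. This is a classical saddle geometry: one ``peak'' along the diagonal and three families of ``valleys''.

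Building on this, I would set up a linking/mountain-pass min-max. The most transparent formulation is reduced: for every $(x_1,x_3)\in(\Om\times\Om)\setminus\De$, with $\De$ the diagonal, the partial supremum
\[
\tilde H(x_1,x_3):=\sup\{H(x_1,x_2,x_3):x_2\in\Om\setminus\{x_1,x_3\}\}
\]
is attained at some interior $x_2^\ast=x_2^\ast(x_1,x_3)$ by the previous step, and any critical point of $\tilde H$ lifts to a critical point of $H$. The function $\tilde H$ is finite on $(\Om\times\Om)\setminus\De$, tends to $+\infty$ on $\De$, and to $-\infty$ at the remaining boundary. I would then introduce a class $\Ga$ of continuous paths $\ga:[0,1]\to(\Om\times\Om)\setminus\De$ whose endpoints lie in a prescribed sublevel $\{\tilde H<a\}$ and which are topologically linked with $\De$, and set
\[
c:=\inf_{\ga\in\Ga}\max_{t\in[0,1]}\tilde H(\ga(t)).
\]
For $a$ sufficiently negative the topology of $(\Om\times\Om)\setminus\De$ forces $c$ to be finite and to satisfy $c>a$. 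Equivalently, this corresponds to a two-dimensional sweeping family inside $\cF_3(\Om)$, obtained by lifting $\ga$ together with the associated $x_2^\ast$, along which the maximum of $H$ equals the maximum of $\tilde H$.

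I then invoke Section~\ref{sec:com}: its compactness result produces a compact $K\subset\cF_3(\Om)$, bounded away from $\pa\Om$, from the attractive-pair diagonals, and from the triple-collision set, such that every almost-critical sequence of $H$ at a finite level such as $c$ stays in $K$. A standard deformation argument then yields a critical point of $H$ in $K$ at level $c$. Since this construction depends only on the values of $H$ on $K$, the same argument applied to a $\cC^1$-perturbation $F$ with $\|F|_K-H|_K\|_{\cC^1}<\de$ produces a critical point of $F$ in $K$, proving part~(a). For part~(b), the critical points $x_\eps\in K$ of $F_\eps$ admit a convergent subsequence by compactness of $K$, and the $\cC^1$-convergence $F_\eps|_K\to H|_K$ forces the limit to be a critical point of $H$. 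The main obstacle will be the middle step: one must verify that the linking class is non-empty, that $c$ is finite, and that $c$ separates strictly from the valley level $a$. This is exactly where \eqref{eq:hyp-1} is essential -- without the triple-sum hypothesis a configuration could escape to $-\infty$ through a triple collision instead of being forced to cross the high-energy barrier around $\De$, and the mountain-pass structure would collapse.
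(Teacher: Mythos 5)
There is a genuine gap, and it sits exactly where your outline is most confident. Your step (i) --- that $H\to-\infty$ whenever some $x_i\to\pa\Om$ --- is false: if the same-sign pair $x_1,x_3$ collides \emph{while} approaching the boundary, the positive term $2\Ga_1\Ga_3G(x_1,x_3)\sim-\tfrac{\Ga_1\Ga_3}{\pi}\log|x_1-x_3|$ can dominate $\Ga_1^2h(x_1)+\Ga_3^2h(x_3)\sim\tfrac{\Ga_1^2+\Ga_3^2}{2\pi}\log d$ when $|x_1-x_3|$ is small compared with a suitable power of $d=\dist(x_1,\pa\Om)$, so $H\to+\infty$ along such sequences. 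Consequently your reduced function $\tilde H$ does \emph{not} tend to $-\infty$ on all of $\pa\bigl((\Om\times\Om)\setminus\De\bigr)$ away from $\De$; the ``one peak, three valleys'' saddle picture collapses, sublevel sets of $\tilde H$ are not compact, and the assertion that $c$ is finite and strictly above the valley level is precisely the unproved hard step. In the paper this is the content of Lemma~\ref{lem:3d}: $H$ is bounded above only on the constrained set $\cL_3(\Om)$ (where $x_2$ lies on the segment between $x_1$ and $x_3$), and proving that bound requires the triple-sum part of \eqref{eq:hyp-1} together with delicate boundary estimates via the reflected points $\bar x_i$ (Lemma~\ref{lem:dni}). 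Your outline replaces this estimate with an appeal to ``the topology.'' (Separately, the reduction itself is shaky: the maximizer $x_2^\ast$ need not be unique, so $\tilde H$ is in general only Lipschitz, and ``critical points of $\tilde H$ lift to critical points of $H$'' needs a nonsmooth or genericity argument.)

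You also misstate what Proposition~\ref{prop:com} provides. It is not a Palais--Smale-type statement confining almost-critical sequences to a compact set; it controls the sign of the Lagrange multiplier in $\nabla H=\la\nabla\Phi$ on the far-out level set $\Phi^{-1}(-M_0)$, within an energy band $a\le H\le b$. Its role in the paper is to let one modify the (increasing) pseudo-gradient flow near $\pa D$, $D=\{\Phi\ge-M_0\}$, so that the flow never exits $D$ while the energy stays in $[a,b]$ (Proposition~\ref{prop:crit}). If the boundary behaviour of $H$ were as simple as you claim, no such device would be needed --- the fact that all of Section~\ref{sec:com} is devoted to it is a signal that the escape-to-the-boundary scenario is the real enemy. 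The paper's actual min-max is also dual to yours: a sup-min over loops $\ga$ homotopic to $\ga_0(\ze)=(\rho\ze,0,2\rho)$, bounded above by $\sup_{\cL_3(\Om)}H$ because every such loop must meet $\cL_3(\Om)$ (the linking lemma quoted from \cite{bartsch-pistoia-weth:2010}), with the flow pushing $H$ \emph{up}. Your part (b) argument (compactness of $K$ plus $\cC^1$-convergence) is fine, but parts (a) as written cannot be completed without supplying the analogue of Lemma~\ref{lem:3d} and the boundary-flow mechanism of Propositions~\ref{prop:com} and \ref{prop:crit}.
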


In the case $N=4$ we need an additional hypothesis on the vorticities:
\begin{equation}\label{eq:hyp-2}
|\Ga_2| < |\Ga_1|+|\Ga_3|\quad\text{and}\quad |\Ga_3| < |\Ga_2|+|\Ga_4|.
\end{equation}

\begin{Thm}\label{thm:N=4}
Let $N=4$ and assume \eqref{eq:hyp-1}, \eqref{eq:hyp-2}. Then there exists a compact subset $K\subset\cF_4(\Om)$ and $\de>0$ such that the following holds:
  \begin{itemize}
  \item[a)]  Any $\cC^1$-function $F:\cF_4(\Om)\to\R$ with $\|F|_K-H|_K\|_{\cC^1}<\de$ has a critical point in $K$.
  \item[b)] If $F_\eps:\cF_2(\Om)\to\R$ is a family of $\cC^1$-functions such that $\|F_\eps|_K-H|_K\|_{\cC^1} \to 0$ then the critical points $x_\eps$ obtained in a) converge along a subsequence towards a critical point of $H$.
  \end{itemize}
\end{Thm}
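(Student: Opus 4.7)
My plan is to reduce Theorem~\ref{thm:N=4} to an application of the compactness result of Section~\ref{sec:com} together with a min-max construction on $\cF_4(\Om)$, parallel to and slightly more involved than the argument used for Theorem~\ref{thm:N=3}. The compactness theorem reduces the analytic side of the problem to exhibiting a topologically nontrivial min-max value $c = \inf_{\sigma\in\Sigma}\sup_{x\in\sigma(Q)}H(x)$ whose admissible class $\Sigma$ is preserved under small $\cC^1$-perturbations of $H$.

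Observe that $H$ can escape to $-\infty$ only through (i) a single vortex $x_i$ approaching $\pa\Om$, so that $\Ga_i^2 h(x_i)\to-\infty$, or (ii) two vortices $x_i, x_j$ with $\Ga_i\Ga_j<0$ colliding. By \eqref{eq:hyp-1} the simultaneous concentration of any three or more vortices forces $H\to+\infty$, since the singular contribution $-\frac{1}{2\pi}\sum_{i\ne j,\,i,j\in I}\Ga_i\Ga_j\log|x_i-x_j|$ has positive coefficient whenever $|I|\ge3$. Hence the only dangerous scenarios are boundary escape and pairwise collisions of opposite-sign vortices. After relabeling so that $\Ga_1,\Ga_3>0$ and $\Ga_2,\Ga_4<0$, the relevant consecutive pairs are $(1,2),(2,3),(3,4)$, and I would build a compact parameter domain $Q$ together with a reference map $\sigma_0:Q\to\cF_4(\Om)$ whose boundary $\pa Q$ parametrizes these limit configurations with $H\circ\sigma_0|_{\pa Q}$ below some threshold $c_\infty$, while at least one interior configuration satisfies $H>c_\infty$; the class $\Sigma$ would consist of continuous deformations of $\sigma_0$ fixing $\pa Q$ and staying inside a prescribed compact neighborhood $K$ of $\sigma_0(Q)$ in $\cF_4(\Om)$.

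The role of hypothesis \eqref{eq:hyp-2} is to make the resulting linking nontrivial. Informally, when a pair $(x_i,x_{i+1})$ with $\Ga_i\Ga_{i+1}<0$ is driven to collision, the residual three-vortex system carries an effective vorticity $\Ga_i+\Ga_{i+1}$, and in order for the obstructions used to define $\Sigma$ to persist on $\pa Q$ one needs this reduced system to still satisfy an analogue of \eqref{eq:hyp-1}. A direct computation shows that this admissibility is equivalent to the two triangle-like inequalities $|\Ga_2|<|\Ga_1|+|\Ga_3|$ and $|\Ga_3|<|\Ga_2|+|\Ga_4|$ for the relevant inner collapses. The main obstacle will be the topological step: showing that $\sigma_0$ cannot be homotoped inside $\cF_4(\Om)\cap\{H\le c+1\}$ to a map whose image lies in the collision strata on $\pa Q$ without crossing either a triple concentration (excluded by \eqref{eq:hyp-1} as $H\to+\infty$) or a boundary contact (excluded by $h\to-\infty$). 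Once this linking is established, a standard deformation lemma on the open manifold $\cF_4(\Om)$ combined with the compactness of Section~\ref{sec:com} yields a critical point $x^*\in K$ of $H$, proving (a); part (b) follows immediately by applying the same compactness result to a sequence of critical points $x_\eps$ of $F_\eps$ and passing to the limit in $\nabla F_\eps(x_\eps)=0$.
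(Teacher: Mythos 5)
Your high-level skeleton (the compactness result of Section~\ref{sec:com} plus a linking/min-max argument) is indeed the paper's strategy, but several of the concrete claims on which your reduction rests are wrong or missing. First, the sign claim about triple concentrations is reversed: if all vortices in a set $I$ with $|I|\ge3$ collide at a comparable scale $\eps$, the singular part of $H$ behaves like $-\frac{1}{2\pi}\bigl(\sum_{i\ne j\in I}\Ga_i\Ga_j\bigr)\log\eps$, and since \eqref{eq:hyp-1} makes that sum negative this tends to $-\infty$, not $+\infty$. So triple collisions are not ``excluded because $H\to+\infty$'', and your list of dangerous degenerations is also incomplete: the configurations that actually threaten compactness are those where $H$ stays \emph{bounded} while $x$ approaches $\pa\conf$, typically a same-sign pair (e.g.\ $\{1,3\}$ or $\{2,4\}$) collapsing while drifting to $\pa\Om$ at a comparable rate, so that $\Ga_i\Ga_jG(x_i,x_j)\to+\infty$ compensates $\Ga_i^2h(x_i)\to-\infty$. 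Ruling out constrained critical points in that regime is the entire content of Proposition~\ref{prop:com} (Lemmas~\ref{lem:i0}--\ref{lem:xn24} and the case analysis in the limits $\al_1,\al_2,\be_1,\be_2$), and your proposal does not engage with it; ``a standard deformation lemma combined with the compactness'' hides the essential construction of a pseudo-gradient that is tangent to, or points inward along, $\Phi^{-1}(-M_0)$ while still increasing $F$ (Proposition~\ref{prop:crit}), which is exactly where the sign $\la>0$ from Proposition~\ref{prop:com} is used.

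Second, the topological step --- which you yourself flag as ``the main obstacle'' --- is precisely what has to be supplied and is not. The paper does not use a relative class fixing $\pa Q$; it takes the closed torus $S=S^1\times S^1$ with $\ga_0(\ze_1,\ze_2)=(\rho\ze_1,0,3\rho,3\rho+\rho\ze_2)$ and the set $\cL_4(\Om)$ of configurations satisfying $x_1-x_2+r(x_3-x_2)=0$ and $x_2-x_3+s(x_4-x_3)=0$, the intersection property \eqref{eq:linking} being quoted from \cite{bartsch-pistoia-weth:2010}. What must then be proved is $\sup_{\cL_4\Om}H<\infty$ (Lemma~\ref{lem:3d}\,b)), and this --- not an ``admissibility of the reduced three-vortex system'' --- is where \eqref{eq:hyp-2} enters: on $\cL_4\Om$ the case $|x_1-x_4|\to0$ forces all mutual distances to collapse, and bounding the quotient $|x_1-x_2|^{k_1k_2}|x_1-x_4|^{k_1k_4}|x_2-x_3|^{k_2k_3}|x_3-x_4|^{k_3k_4}\big/\bigl(|x_1-x_3|^{k_1k_3}|x_2-x_4|^{k_2k_4}\bigr)$ via $|x_1-x_4|\le|x_1-x_3|+|x_3-x_4|$ requires exactly $k_1(k_2+k_4-k_3)>0$ and $k_4(k_1+k_3-k_2)>0$, i.e.\ \eqref{eq:hyp-2}. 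Your derivation of \eqref{eq:hyp-2} from an ``effective vorticity $\Ga_i+\Ga_{i+1}$'' heuristic is not substantiated and does not match this mechanism. Part b) of your argument is fine.
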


Observe that \eqref{eq:hyp-1} and \eqref{eq:hyp-2} hold if $\Ga_i=(-1)^i$. This case has already been treated in \cite{bartsch-pistoia-weth:2010}. The proof of \cite[Theorem~1.2]{bartsch-pistoia-weth:2010} has a gap, however, which is being fixed in this paper using a different method though. Related results concerning point vortex equilibria on general bounded domains can also be found in \cite{kuhl:2014a} and, if the domain is symmetric, in \cite{kuhl:2014b}. These papers complement our results in that different conditions on the set of vorticities are considered. Earlier results dealing with the case of $\Om$ not being simply connected and all $\Ga_i=1$ can be found in \cite{delpino-etal:2005,esposito-etal:2005}. Periodic solutions of (HS) for any given $N$ with all $\Ga_i=1$, on bounded and unbounded domains, have been constructed in \cite{bartsch-dai:2014}.

The point vortex equilibria obtained in Theorems~\ref{thm:N=2}-\ref{thm:N=4} can be regularized as limits of vorticity distributions of smooth steady state solutions of the incompressible Euler equations in the following way. Let $G$ be the Green function of $-\Delta$ in $\Om$ with homogeneous Dirichlet boundary conditions and let $\psi_0\in\cC^2(\overline\Om)$ be harmonic in $\Om$. We consider the Kirchhoff-Routh path function $H_{KR}:\conf\to\R$ defined by
\beq[path-fct]
H_{KR}(x)
 = \sum^N_{i=1}\Ga^2_i h(x_i) + \sum_{\mytop{i,j=1}{i\ne j}}^N\Ga_i\Ga_jG(x_i,x_j) +
    2\sum^N_{i=1}\Ga_i \psi_0(x_i).
\eeq
We write $\frac{\pa\psi_0}{\pa\tau}:\pa\Om\to\R^2$ for the tangential derivative of $\psi_0$ on $\pa\Om$, and we set $(w_1,w_2)^\perp = J(w_1,w_2) := (w_2,-w_1)$.

\begin{Thm}\label{thm:euler}
Consider one of the cases
\begin{itemize}
\item[(i)] $N=2$ and $\Ga_1\Ga_2<0$;
\item[(ii)] $N=3$ and \eqref{eq:hyp-1} holds;
\item[(iii)] $N=4$ and \eqref{eq:hyp-1}, \eqref{eq:hyp-2} hold.
\end{itemize}
Then for $\eps>0$ small there exists a stationary solution $v_\eps:\Om\to\R^2$ of \eqref{eq:euler} with pressure $P_\eps$ and boundary flux $v(x)\cdot\nu(x) = \frac{\pa\psi_0(x)}{\pa\tau}$. Moreover, the scalar vorticity of $v_\eps$ is of the form $\om_\eps = \nabla\times v_\eps = \sum_{i=1}^N \om_{i,\eps}$ with $\supp(\om_{i,\eps})\to x^*_i\in\Om$ as $\eps\to0$ along a subsequence, $\int_\Om \om_{i,\eps} \to \Ga_i$, where $(x^*_1,\dots,x^*_N)\in\conf$ is a critical point of the Kirchhoff-Routh path function $H_{KR}$ from \eqref{eq:path-fct}.
\end{Thm}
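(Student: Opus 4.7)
The plan is to implement the Lyapunov--Schmidt desingularization scheme of Cao, Liu and Wei \cite{cao-liu-wei:2013,cao-liu-wei:2014} and combine it with the abstract existence results Theorems \ref{thm:N=2}--\ref{thm:N=4}. In terms of the stream function $\psi_\eps$ defined by $v_\eps=J\nabla\psi_\eps$, the stationary Euler system \eqref{eq:euler} is equivalent to $\om_\eps=-\De\psi_\eps$ being constant along streamlines, i.~e.\ $-\De\psi_\eps=f_\eps(\psi_\eps)$ for some profile $f_\eps$. The prescribed boundary flux $v_\eps\cdot\nu=\pa\psi_0/\pa\tau$ translates, after a direct computation, into requiring $\psi_\eps-\psi_0$ to be locally constant on $\pa\Om$, which is exactly what is encoded by the hydrodynamic Green's function entering $H_{KR}$.

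For each configuration $x=(x_1,\dots,x_N)\in\conf$ I would build an approximate stream function
\begin{equation*}
\psi_\eps^{\rm app}(y;x) = \sum_{i=1}^N \Ga_i U_{\eps,i}(y;x_i) + \psi_0(y),
\end{equation*}
where $U_{\eps,i}$ is a suitably rescaled radial vortex profile supported in a disk of radius $O(\eps)$ around $x_i$, chosen so that $-\De U_{\eps,i}$ has the sign $\sign(\Ga_i)$, total mass one, and outer expansion matching $G(y,x_i)$. Linearising the equation $-\De\psi=f_\eps(\psi)$ at $\psi_\eps^{\rm app}$ produces an operator whose essential kernel is generated to leading order by the translation modes $\pa_{x_{i,\alpha}}U_{\eps,i}$ for $i=1,\dots,N$ and $\alpha=1,2$. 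Standard invertibility estimates modulo this $2N$-dimensional space, carried out in \cite{cao-liu-wei:2013,cao-liu-wei:2014}, furnish a small correction $\varphi_\eps(\cdot;x)$ and a reduced functional $F_\eps:\conf\to\R$ whose critical points correspond bijectively to genuine stationary vortex solutions $\psi_\eps(\cdot;x)=\psi_\eps^{\rm app}(\cdot;x)+\varphi_\eps(\cdot;x)$.

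The key estimate to establish is the expansion
\begin{equation*}
F_\eps(x) = c_\eps + d_\eps\bigl(H_{KR}(x) + r_\eps(x)\bigr),\qquad \|r_\eps\|_{\cC^1(K)}\to 0,
\end{equation*}
uniformly on every compact subset $K\subset\conf$, with $d_\eps>0$. Given the pair $(K,\de)$ supplied by the appropriate one of Theorems \ref{thm:N=2}--\ref{thm:N=4}, choose $\eps$ so small that $\|d_\eps^{-1}(F_\eps-c_\eps)-H_{KR}\|_{\cC^1(K)}<\de$. The theorem then provides a critical point $x^\eps\in K$ of $F_\eps$, equivalently a stationary Euler solution $v_\eps=J\nabla\psi_\eps(\cdot;x^\eps)$ whose vorticity splits as $\om_\eps=\sum_{i=1}^N\om_{i,\eps}$ with $\supp(\om_{i,\eps})\subset B(x_i^\eps,C\eps)$ and $\int_\Om\om_{i,\eps}\to\Ga_i$. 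Compactness of $K$ yields $x^\eps\to x^*$ along a subsequence, and the $\cC^1$-convergence clause of the relevant theorem forces $x^*$ to be a critical point of $H_{KR}$.

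The main obstacle is the uniform $\cC^1$-closeness of $F_\eps$ to $H_{KR}$ on the fixed compact set $K$. While $\cC^0$-closeness follows from a routine expansion involving $\int U_{\eps,i}(-\De U_{\eps,j})$ together with the boundary and regular-part contributions encoded in $G$ and $h$, $\cC^1$-closeness requires differentiating the reduction with respect to the concentration points, and thereby controlling $\pa_{x_i}\varphi_\eps$ in the same norm. For $\psi_0=0$ this control is carried out in \cite{cao-liu-wei:2013,cao-liu-wei:2014}; including a non-trivial harmonic $\psi_0$ merely adds a regular term to $\psi_\eps^{\rm app}$ and, via the expansion, produces the linear contribution $2\sum_i\Ga_i\psi_0(x_i)$ in $H_{KR}$, without altering the structure of the reduction.
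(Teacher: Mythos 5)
Your proposal follows essentially the same route as the paper: a Lyapunov--Schmidt reduction in the style of Cao--Liu--Wei producing a reduced functional of the form $\al(\eps)+\be(\eps)H_{KR}(x)+\chi_\eps(x)$ with $\chi_\eps\to0$ in $\cC^1$ on compact subsets of $\conf$, followed by an application of Theorems \ref{thm:N=2}--\ref{thm:N=4} to obtain critical points converging to a critical point of $H_{KR}$. The paper simply makes the ansatz concrete (the nonlinearities $f_i(t)=\pm t_\pm^p$ and the projected profiles $PW_{\de,x_i,a_i}$) and cites \cite{cao-liu-wei:2013} for the expansion that you correctly identify as the key estimate.
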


Here $\supp(\om_{i,\eps})\to x_i\in\Om$ means that for $\de>0$ the support  $\supp(\om_{i,\eps})$ is contained in the $\de$-neighborhood of $x_i\in\Om$ provided $\eps$ is small. Theorem~\ref{thm:euler} will be proved by the method of stream functions. Recall that a stream function $\psi:\Om\to\R$ for $v$ satisfies
$v = J\nabla\psi = (-\pa\psi/\pa x_2,\pa\psi/\pa x_1)$, hence $\om = -\Delta\psi$ and
$v = J\nabla(-\Delta)^{-1}\om$. If $\psi:\Om\to\R$ satisfies
\beq[stream]
\left\{
\begin{aligned}
-\Delta \psi &= F'(\psi) &&\text{for $x\in\Om$,}\\
\psi &= \psi_0 &&\text{for $x\in\pa\Om$,}
\end{aligned}
\right.
\eeq
for some arbitrary function $F\in\cC^2(\R)$ then $v=J\nabla\psi$ solves \eqref{eq:euler} with pressure field $P=F(\psi)-\frac12|\nabla\psi|^2$ and vorticity $F'(\psi)$. Using the method from \cite{cao-liu-wei:2013,cao-liu-wei:2014} and our Theorems \eqref{thm:N=2}--\eqref{thm:N=4} there are appropriate functions $F_\eps$ and solutions of \eqref{eq:stream} with $F=F_\eps$ which will yield Theorem~\ref{thm:euler}. The theorems from \cite{cao-liu-wei:2013,cao-liu-wei:2014} cannot be applied directly because there it is assumed that the Kirchhoff-Routh path function $H_{KR}$ has an isolated stable critical point. This will not be the case in general, for instance, it doesn't hold for $\Om$ a disc or an annulus. The latter case is excluded in \cite{cao-liu-wei:2013} anyway because there the domain is required to be simply connected. This is needed when one wants to prescribe the boundary flux, not the function $\psi_0$.

\section{A compactness result}\label{sec:com}
We fix a function $G$ as in \eqref{eq:green} such that (A1)--(A4) hold, we fix a function $f\in\cC^1(\overline\Om)$, and we consider a Hamiltonian $H$ as in \eqref{eq:H}. Then we introduce the function $\Phi:\conf\to\R$ defined by
\[
\Phi(x) := \sum_{i=1}^N \Ga_i^2h(x_i)-\sum_{\mytop{i,j=1}{i\not=j}}^N |\Ga_i\Ga_j|G(x_i,x_j).
\]
Assumptions (A1) and (A2) imply
\[
\lim_{x\to\pa\conf}\Phi(x)=-\infty.
\]

\begin{Prop}\label{prop:com}
Assume that $N\in\{3,4\}$ and \eqref{eq:hyp-1} is satisfied. Then for any $a,b\in\R$ with $a<b$ there exists $M_0>0$ such that the following holds:
\[
\Phi(x)\le-M_0,\ a\le H(x)\le b,\ \nabla H(x) = \la\nabla\Phi(x)
\qquad\Longrightarrow\qquad \lambda>0.
\]
\end{Prop}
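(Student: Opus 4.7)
The plan is to argue by contradiction. Suppose the conclusion fails; then there exist $a,b\in\R$ and a sequence $x^{(n)}\in\cF_N(\Omega)$ with $\Phi(x^{(n)})\to-\infty$, $a\le H(x^{(n)})\le b$, and $\nabla H(x^{(n)})=\lambda_n\nabla\Phi(x^{(n)})$ with $\lambda_n\le 0$. To reach a contradiction, I will exhibit a test vector $v^{(n)}\in\R^{2N}$ whose pairing $\langle\nabla H(x^{(n)}),v^{(n)}\rangle/\langle\nabla\Phi(x^{(n)}),v^{(n)}\rangle$ has a strictly positive limit.

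After extracting subsequences, I would analyse the cluster structure: let $x_k^{(n)}\to x_k^*\in\overline\Omega$, and group the indices $\{1,\dots,N\}$ into equivalence classes $I$ according to common limits $x_k^*$, separating interior clusters from those whose limit lies in $\partial\Omega$. Since $\Phi(x^{(n)})\to-\infty$ while $h$ and $G$ are bounded on the compacta described in (A3), at least one cluster must be \emph{singular}, meaning either (i) $|I|\ge 2$ with cluster diameter tending to $0$, or (ii) a single index $i$ with $\dist(x_i^{(n)},\partial\Omega)\to 0$. The $H$-boundedness rules out some configurations a priori. Indeed, for a $2$-cluster $I=\{i,j\}$ with $\Gamma_i\Gamma_j>0$ the contribution $2\Gamma_i\Gamma_j G(x_i,x_j)$ would force $H\to+\infty$; for a $k$-cluster with $k\ge 3$ the hypothesis \eqref{eq:hyp-1} makes $\sum_{i<j\in I}\Gamma_i\Gamma_j<0$, again pushing $H\to+\infty$ if the cluster collapses in isolation, so that compensations from other clusters are the only possibility.

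For a purely interior cluster $I$ collapsing to $\bar x\in\Omega$ (with the other vortices staying at macroscopic distance), the natural test direction is $v_k=x_k-\bar x^{(n)}$ for $k\in I$ and $v_k=0$ otherwise, where $\bar x^{(n)}$ is the cluster centroid. Using the symmetry trick $\nabla_1 G(x,y)+\nabla_2 G(x,y)=\nabla g(x,y)$ on the singular part $-\frac{1}{2\pi}\log|x-y|$ one obtains
\[
\langle\nabla H,v\rangle = -\tfrac{1}{\pi}\sum_{i<j\in I}\Gamma_i\Gamma_j+o(1),\qquad
\langle\nabla\Phi,v\rangle = \tfrac{1}{\pi}\sum_{i<j\in I}|\Gamma_i\Gamma_j|+o(1),
\]
the $o(1)$ absorbing the bounded contributions from $\nabla h$, $\nabla f$ and interactions with vortices outside $I$. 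Whenever $|I|\ge 3$, \eqref{eq:hyp-1} gives $\sum_{i<j\in I}\Gamma_i\Gamma_j<0$; for $|I|=2$ the preceding remark forces $\Gamma_i\Gamma_j<0$. In both cases the ratio tends to a strictly positive constant, contradicting $\lambda_n\le 0$. For a single vortex $x_i$ tending to $\partial\Omega$, I use $v_i=-\nu(x_i^{(n)})$ and $v_k=0$ otherwise; assumption (A4) yields $\nabla h(x)=\frac{\nu(x)}{2\pi\dist(x,\partial\Omega)}+O(1)$, so that both $\langle\nabla H,v\rangle$ and $\langle\nabla\Phi,v\rangle$ share the leading term $-\frac{\Gamma_i^2}{2\pi\dist(x_i,\partial\Omega)}$, whence the ratio tends to $1>0$.

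The main obstacle is the genuinely mixed scenario, where several clusters are singular simultaneously, or a cluster collapses near the boundary and so is singular at two different scales. My plan for this is a multi-scale induction: identify the finest scale $\eps^{(n)}\to 0$ governing a dominant sub-cluster $I^*$ of $\Phi$, use a test vector supported on $I^*$ and rescaled by $\eps^{(n)}$, and show that contributions from coarser scales, from vortices outside $I^*$, and from the regular parts $g,\psi,f$ of the Green kernel are of strictly lower order in $\eps^{(n)}$; assumption (A4) is essential here because it controls precisely the combination of $g$ with the reflection that appears in boundary clusters. At each scale the combinatorial inequality in \eqref{eq:hyp-1} (together with the $H$-boundedness exclusion of same-sign $2$-collapses and $|I|\ge 3$ collapses in isolation) provides the positive sign, so the induction closes and we deduce $\lambda_n>0$ for large $n$, the desired contradiction.
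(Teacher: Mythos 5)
Your setup (contradiction sequence, cluster decomposition, the test vector $z^n_i=x^n_i-\bar x^{(n)}$ supported on a collapsing cluster, and the resulting limits $\langle\nabla H,z^n\rangle\to-\tfrac1\pi\sum_{i<j\in I}\Ga_i\Ga_j$, $\langle\nabla\Phi,z^n\rangle\to\tfrac1\pi\sum_{i<j\in I}|\Ga_i\Ga_j|$) matches the paper's Lemmas~\ref{lem:i0} and \ref{lem:I1} exactly. But there is a genuine gap at the point you defer to a ``multi-scale induction'': the one configuration that actually survives all your sign arguments is a \emph{same-sign} pair collapsing while simultaneously approaching $\pa\Om$, and for that configuration no test vector of the kind you propose yields a positive ratio. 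Concretely: your claim that $H$-boundedness excludes a $2$-cluster with $\Ga_i\Ga_j>0$ fails when the pair tends to the boundary, because $\Ga_i^2h(x_i)+\Ga_j^2h(x_j)\to-\infty$ compensates $2\Ga_i\Ga_jG(x_i,x_j)\to+\infty$; indeed the paper's Lemma~\ref{lem:i0} shows a cluster with $|x_{i}^n-x_{j}^n|=o(d_{i}^n)$ \emph{must} exist, and Lemma~\ref{lem:I1} shows it is forced to be $I=\{1,3\}$ or $I=\{2,4\}$ --- precisely a same-sign pair, since \eqref{eq:hyp-1} makes the signs alternate. For such a pair your ratio equals $-\Ga_i\Ga_j/|\Ga_i\Ga_j|=-1$, i.e.\ $\la_n\to-1\le0$: perfectly consistent with the contradiction hypothesis. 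Hypothesis \eqref{eq:hyp-1} gives you no sign information here because its second clause only constrains subsets with $|I|\ge3$, so ``the combinatorial inequality provides the positive sign at each scale'' is false at exactly the scale that matters, and the induction does not close.

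The paper's proof of this remaining case is not a refinement of the same idea but a different mechanism: since $\la_n\to-1$, the singular intra-cluster interaction enters $\nabla(H-\la_n\Phi)$ with the small factor $(1+\la_n)$ (see \eqref{eq:pa13}), which tames it; one then tests against the normal directions $\nu^n_1,\nu^n_3$ and against $x^n_2-x^n_1$, shows via Lemma~\ref{lem:xn24} that $x^n_2,x^n_4$ must converge to the same boundary point, introduces the limit ratios $\al_1,\al_2,\be_1,\be_2$ of \eqref{eq:alphabeta}, and derives the algebraic system \eqref{eq:x1}--\eqref{eq:x3} (resp.\ \eqref{eq:y1}--\eqref{eq:y5}, \eqref{eq:z1}--\eqref{eq:z3}), whose positive linear combination forces $(\sum_i\Ga_i^2)\al_2=0$, a contradiction. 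This uses the reflected points $\bar x^n_i$ and assumption (A4) in an essential quantitative way (Lemmas~\ref{lem:dni} and \ref{lem:comp1}); none of it is recoverable from a single dominant-scale test vector. Your proposal is therefore incomplete in the central case, and the sketch of how to handle it points in a direction that cannot work as stated.
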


The rest of this section is concerned with the proof of Proposition~\ref{prop:com}. We argue by contradiction. Suppose there exist $a,b\in\R$ with $a<b$, a sequence of points $x^n=(x_1^n,\ldots, x_N^n)\in \conf$, and a sequence  $\la_n\le0$ such that
\begin{equation}\label{eq:xn}
\Phi(x^n)\to-\infty,\ a\le H(x^n)\le b, \text{ and }
 \nabla H(x^n)=\la _n\nabla \Phi(x^n).
\end{equation}
Recall from Section~\ref{sec:results} the reflection $x\mapsto\bar x$ at the boundary, the projection $x\mapsto p(x)$ onto the boundary, and the interior normal $x\mapsto\nu(x)$. These maps are defined for $x\in\Om_0$ close to the boundary. We set $d_i^n:=\dist(x_i^n,\pa\Om)$, and $\nu_i^n:=\nu(x_i^n)$, $p_i^n:=p(x_i^n)$, if $x_i^n\in\Om_0$. In the sequel $O(1)$, $o(1)$ refer to $n\to\infty$. The following lemma holds for all sequences $(x^n)_n$ in $\conf$.

\begin{Lem}\label{lem:dni}
\begin{itemize}
\item[(i)] $h(x_i^n) = \frac{1}{2\pi}\log2d_i^n + O(1)$ and $d_i^n|\nabla h(x_i^n)|=O(1)$ if $x_i^n\in\Om_0$.
\item[(ii)] $\nabla h(x_i^n) = \frac{1}{2\pi d_i^n}\nu_i^n + o(1)$ if $d_i^n\to0$
\item[(iii)] $G(x_i^n,x_j^n) = -\frac{1}{2\pi}\log|x_i^n-x_j^n| + \frac{1}{2\pi}\log|x_i^n-\bar{x}_j^n| + O(1)$ if $x_j^n\in\Om_0$.
\item[(iv)] $G(x_i^n,x_j^n) = O(1) $ if $\liminf\frac{|x_i^n-x_j^n|}{d_i^n} > 0$.
\item[(v)] $\pa_1G(x_i^n,x_j^n)
    = -\frac{1}{2\pi}\left(\frac{x_i^n-x_j^n}{|x_i^n-x_j^n|^2}
       + \frac{\bar x_i^n-x_j^n}{|\bar x_i^n-x_j^n|^2}\right) + O(1)
    = -\frac{1}{2\pi}\left(\frac{x_i^n-x_j^n}{|x_i^n-x_j^n|^2}
       + \frac{x_i^n-\bar x_j^n}{|x_i^n-\bar x_j^n|^2}\right) + O(1)$ if $x_i^n\in\Om_0$ or $x_j^n\in\Om_0$, respectively.
\item[(vi)] $d_i^n|\nabla g(x_i^n,x_j^n)|=O(1)$ if $x_i^n\in\Om_0$.
\item[(vii)] $\langle\pa_1G(x_i^n,x_j^n),\nu^n_i\rangle
      + \langle\pa_1G(x_j^n,x_i^n),\nu^n_j\rangle
    = \frac{1}{2\pi}\left(d_i^n+d_j^n\right)\left(\frac{1}{|\bar x_i^n-x_j^n|^2}
      + \frac{1}{|\bar x_j^n-x_i^n|^2}\right)+O(1)$ if $x_i^n,x_j^n\in\Om_0$.
\item[(viii)] $|\bar x_i^n-x_j^n|^2 = |x_i^n-x_j^n|^2+4d_i^nd_j^n + o(|x_i^n-x_j^n|^2)$ if $x_i^n,x_j^n\to x^*\in\pa\Om$.
\item[(ix)] $\langle p_i^n-p_j^n,\nu_i^n\rangle = O(|x_i^n-x_j^n|^2)$ if $x_i^n,x_j^n\in\Om_0$.
\end{itemize}
\end{Lem}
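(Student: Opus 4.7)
The main workhorse throughout will be assumption (A4), which near the boundary gives the decomposition
\[
g(x,y) = \frac{1}{2\pi}\log|\bar x - y| + \psi(x,y) = \frac{1}{2\pi}\log|x - \bar y| + \psi(y,x)
\]
(the second equality from the symmetry of $g$ combined with (A4) applied to the pair $(y,x)$), together with uniform $\cC^1$-bounds on $\psi$. To this we adjoin the elementary identities $|x - \bar x| = 2 d(x)$, $\nabla d = \nu$, $D\nu\cdot\nu = 0$, and the separation $|x - \bar y| \geq d(x) + d(y)$ valid because $\bar y \notin \overline\Omega$ while $x \in \Omega$. The $\cC^2$-regularity of $\pa\Omega$ also yields the local expansions $|\nu(x) - \nu(y)| = O(|x - y|)$ and $\langle x - y, \nu(x)\rangle = d(x) - d(y) + O(|x - y|^2)$ for $x,y$ close to a common boundary point.

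Granted these tools, items (i), (iii), (vi), (viii) are essentially one-line substitutions: (i) comes from $h = g(x,x)$ and $|x - \bar x| = 2d$; (iii) from $G = g - \frac{1}{2\pi}\log|x-y|$; (vi) from $|\nabla_x g| \leq C/|\bar x - y| + O(1) \leq C/d + O(1)$; and (viii) from expanding $|\bar x - y|^2 = |(x-y) - 2 d_x \nu_x|^2$ and substituting the normal expansion above. Item (iv) then follows from (iii) together with the two-sided bound $c \leq |x_i - \bar x_j|/|x_i - x_j| \leq C$; the upper side uses $|x_i - \bar x_j| \leq |x_i - x_j| + 2 d_j$ and $d_j \leq d_i + |x_i - x_j| \leq (1 + 1/c)|x_i - x_j|$, and the lower side uses (viii). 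Item (ix) follows from $p_i - p_j = (x_i - x_j) - d_i\nu_i + d_j\nu_j$ by projecting on $\nu_i$ and invoking the normal expansion to see the linear terms cancel to first order.

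For (ii), we differentiate $h(x) = \frac{1}{2\pi}\log(2d) + \psi(x,x)$ with $\nabla d = \nu$ to obtain $\nabla h = \frac{\nu}{2\pi d} + O(1)$; relative to the divergent main term this is $o(1/d)$, which we read as the content of the stated error. Item (v) is immediate from $\nabla_x G = -\frac{1}{2\pi}\frac{x-y}{|x-y|^2} + \nabla_x g$ combined with either form of (A4); for the projection onto the normal direction needed in (vii), we will use the cleaner exact identity $\langle \nabla_x \log|\bar x - y|, \nu(x)\rangle = -\langle \bar x - y, \nu(x)\rangle/|\bar x - y|^2$, derived directly from $\bar x = x - 2d\nu$, $\nabla d = \nu$, and $D\nu\cdot\nu = 0$.

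The main obstacle is therefore (vii). Projecting (v) onto $\nu_i$ and $\nu_j$ and adding produces two kinds of contributions. The non-reflected terms combine into $\frac{\langle x_i - x_j, \nu_i - \nu_j\rangle}{|x_i - x_j|^2}$, which is $O(1)$ by $|\nu_i - \nu_j| = O(|x_i - x_j|)$. In the reflected part, the substitution $\langle \bar x_i - x_j, \nu_i\rangle = \langle x_i - x_j, \nu_i\rangle - 2 d_i$ separates the desired singular piece $-\frac{2 d_i}{|\bar x_i - x_j|^2}$ (and its $j$-analogue) from a residual
\[
\frac{\langle x_i - x_j, \nu_i\rangle}{|\bar x_i - x_j|^2} - \frac{\langle x_i - x_j, \nu_j\rangle}{|\bar x_j - x_i|^2},
\]
which we will bound by $O(1)$ using $|\nu_i - \nu_j| = O(|x_i - x_j|)$ combined with the sharpened comparison $\bigl||\bar x_i - x_j|^2 - |\bar x_j - x_i|^2\bigr| = O((d_i + d_j)|x_i - x_j|^2)$, itself a refinement of the calculation in (viii). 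Finally, the elementary identity $\frac{d_i}{A} + \frac{d_j}{B} = \frac{d_i + d_j}{2}\bigl(\frac{1}{A} + \frac{1}{B}\bigr) + \frac{(d_i - d_j)(B - A)}{2AB}$ with $A = |\bar x_i - x_j|^2$ and $B = |\bar x_j - x_i|^2$ recasts the principal part in the symmetric form stated in (vii); the extra term is $O(1)$ by the same comparison. The challenge is purely one of bookkeeping, tracking several layers of sub-leading cancellation while keeping all errors uniform.
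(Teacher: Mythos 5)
The paper dispatches this lemma in one sentence (``These statements follow in a straightforward way from assumptions (A1)--(A4)''), so your write-up is not an alternative route but the missing verification itself, and it follows the intended path: decompose $g$ via (A4), exploit $\bar x=x-2d\nu$, $\nabla d=\nu$, $D\nu\cdot\nu=0$, and the $\cC^2$-boundary expansions $|\nu(x)-\nu(y)|=O(|x-y|)$ and $\langle x-y,\nu(x)\rangle=d(x)-d(y)+O(|x-y|^2)$. Your treatment of the genuinely delicate items is sound: the exact identity $\langle\nabla_x\log|\bar x-y|,\nu(x)\rangle=-\langle\bar x-y,\nu(x)\rangle/|\bar x-y|^2$ is correct and is precisely what makes (vii) work, the algebraic symmetrization $\frac{d_i}{A}+\frac{d_j}{B}=\frac{d_i+d_j}{2}\bigl(\frac1A+\frac1B\bigr)+\frac{(d_i-d_j)(B-A)}{2AB}$ is verified easily, and the comparison $|A-B|=O\bigl((d_i+d_j)|x_i-x_j|^2\bigr)$ follows from $A-B=-4\langle d_i\nu_i+d_j\nu_j,\,p_i-p_j\rangle$ together with (ix). Two remarks. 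First, on (ii): you are right that (A4) only yields a bounded, not vanishing, remainder, so strictly one obtains $\nabla h=\frac{1}{2\pi d}\nu+O(1)$; this weaker form is all that is ever used (every application absorbs it into an $O(1)$ term), so your honest reading is the correct one. Second, your claim that (v) is ``immediate'' in \emph{either} form is too quick: differentiating $\log|\bar x-y|$ in $x$ produces $(D\bar x)^T(\bar x-y)/|\bar x-y|^2$, whose tangential component differs from that of $-(\bar x-y)/|\bar x-y|^2$ by $2(\bar x-y)_{\tan}/|\bar x-y|^2$, which is \emph{not} $O(1)$ (on the half-plane it is exactly $\pm\frac{1}{\pi}\frac{x_1-y_1}{|x-\bar y|^2}$); moreover the printed signs in (v) are inconsistent with the half-plane model and with the way (v) is used in Lemma 3.6, where the reflected term appears with the opposite sign. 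This is a defect of the lemma's statement rather than of your argument --- only the normal projection of the first form and the (sign-corrected) second form are ever invoked, and your toolkit delivers exactly those --- but you should state (v) in the form you can actually prove rather than endorse the printed vector identity.
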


\begin{proof}
These statements follow in a straightforward way from assumptions (A1)--(A4).
\end{proof}

We write the proof of Proposition~\ref{prop:com} for $N=4$. The case $N=3$ is simpler and can be deduced by forgetting all arguments which involve $x_4^n$. In the sequel we drop the notation $n\to\infty$ from all kinds of limits. The first lemma does not require hypothesis \eqref{eq:hyp-1}. It is sufficient that all $\Ga_i\ne0$ for $i=1,\dots,4$.

\begin{Lem}\label{lem:i0}
There exist indices $i_0\ne j_0$ such that $\displaystyle\liminf\frac{|x_{i_0}^n-x_{j_0}^n|}{d_{i_0}^n}\to0$.
\end{Lem}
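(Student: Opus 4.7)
I would argue by contradiction: suppose that for every pair $i\ne j$ one has $\liminf_{n\to\infty}|x_i^n-x_j^n|/d_i^n>0$. The aim is to derive a contradiction from this together with the standing hypotheses $\Phi(x^n)\to-\infty$ and $a\le H(x^n)\le b$.

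The first step is to invoke Lemma~\ref{lem:dni}(iv): under the contradiction assumption it applies to every pair and gives $G(x_i^n,x_j^n)=O(1)$ for all $i\ne j$. Since in addition $f\in\cC^1(\overline\Om^N)$ is bounded on the compact set $\overline\Om^N$, the defining formulas
\[
H(x^n)=\sum_{i=1}^N\Ga_i^2 h(x_i^n)+\sum_{i\ne j}\Ga_i\Ga_j G(x_i^n,x_j^n)+f(x^n),\qquad
\Phi(x^n)=\sum_{i=1}^N\Ga_i^2 h(x_i^n)-\sum_{i\ne j}|\Ga_i\Ga_j|G(x_i^n,x_j^n)
\]
both reduce to $\sum_{i=1}^N\Ga_i^2 h(x_i^n)$ up to a bounded remainder. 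Hence $\Phi(x^n)\to-\infty$ forces $\sum_{i=1}^N\Ga_i^2 h(x_i^n)\to-\infty$, and plugging this back into the formula for $H$ gives $H(x^n)\to-\infty$, contradicting $H(x^n)\ge a$.

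The essential, and arguably the only, content of the lemma is this dichotomy: once all inter-vortex $G$-terms are uniformly bounded, the Hamiltonian $H$ and the auxiliary function $\Phi$ become indistinguishable up to $O(1)$ corrections, so the fact that one is bounded while the other diverges is the whole obstruction. I do not foresee a serious technical obstacle here; neither the Lagrange multiplier relation $\nabla H=\la_n\nabla\Phi$ nor hypothesis \eqref{eq:hyp-1} is required at this stage, and they will presumably enter only in the subsequent lemmas which locate and analyse the colliding cluster $\{x_{i_0}^n,x_{j_0}^n\}$ whose existence has just been established.
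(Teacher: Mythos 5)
Your proof is correct and is essentially the paper's own argument: negate the statement, use Lemma~\ref{lem:dni}(iv) to get $G(x_i^n,x_j^n)=O(1)$ for all pairs, and then play the boundedness of $H(x^n)$ against the divergence $\Phi(x^n)\to-\infty$, which under these circumstances differ only by $O(1)$. The paper phrases the last step by first extracting an index $k$ with $d_k^n\to0$ and bounding $\sum_i\Ga_i^2h(x_i^n)\le\Ga_k^2h(x_k^n)+O(1)$ via (A2), but this is the same computation.
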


\begin{proof}
Suppose to the contrary that $|x^n_i-x^n_j|\ge cd^n_i$ for all $i\ne j$. Then \eqref{eq:xn} implies that $d^n_k\to0$ for some $k\in\{1,\dots,4\}$. Using Lemma~\ref{lem:dni} we can estimate the energy:
\[
\begin{aligned}
H(x^n)
 &= \sum\limits_{i=1}^N\Ga_i^2h(x_i^n)
     +\sum\limits_{\vlist{i,j=1}{i\neq j}}^N\Ga_i\Ga_jG(x_i^n, x_j^n)
  =\sum\limits_{i=1}^N\Ga_i^2h(x_i^i)+O(1)\\
 &\le \Ga_k^2h(x_k^n)+O(1) = \frac{1}{2\pi}\ln d_k^n+O(1) \to -\infty.
\end{aligned}
\]
This contradicts \eqref{eq:xn}.
\end{proof}

After passing to a subsequence we may assume for each $i\in\{1,\dots,4\}$:
\begin{equation}\label{eq:i0}
\text{either } |x_i^n-x_{i_0}^n|=o(d_{i_0}^n)\quad \text{ or }\quad \liminf\frac{|x_i^n-x_{i_0}^n|}{d_{i_0}^n}>0.
\end{equation}
Setting
\[
I:=\big\{i\in\{1,\dots,4\}:|x_i^n-x_{i_0}^n|=o(d_{i_0}^n)\big\}
\]
Lemma~\ref{lem:i0} implies
\begin{equation}\label{eq:I}
\left\{
\begin{aligned}
&|I|\ge2,\ \frac{d_i^n}{d_j^n}\to1\ \text{ and }\ |x_i^n-x_j^n|=o(d_i^n)\ \text{ for }\ i,j\in I,\\
&|x_i^n-x_j^n|=o(|x_i^n-x_k^n|)\ \text{ for }\ i,j\in I,\ k\notin I.
\end{aligned}
\right.
\end{equation}

\begin{Lem}\label{lem:I1}
The only possibilities for $I$ are $\{1,3\}$ or $\{2,4\}$. Moreover $\la_n\to-1$.
\end{Lem}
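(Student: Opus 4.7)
\textbf{Proof plan for Lemma~\ref{lem:I1}.} The approach is a Pohozaev-type scaling identity: one tests the equation $\nabla H(x^n)=\la_n\nabla\Phi(x^n)$ against the displacement vector field $(x_k^n-x_{i_0}^n)_{k\in I}$, and reads off the resulting asymptotic relation between $\la_n$ and the vorticities restricted to $I$. Concretely, the plan is to compute
\[
0=\sum_{k\in I}\bigl\langle x_k^n-x_{i_0}^n,\ \nabla_k H(x^n)-\la_n\nabla_k\Phi(x^n)\bigr\rangle
\]
and isolate the leading order, extracting from it a simple linear equation in $\la_n$.

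The key algebraic identity, obtained by symmetrising over each unordered pair $\{k,j\}\subset I$ and using $\nabla_1 G(x_j,x_k)=\nabla_2 G(x_k,x_j)$, is
\[
\bigl\langle x_k^n-x_{i_0}^n,\nabla_1 G(x_k^n,x_j^n)\bigr\rangle
+\bigl\langle x_j^n-x_{i_0}^n,\nabla_2 G(x_k^n,x_j^n)\bigr\rangle
=-\frac{1}{2\pi}+o(1),
\]
because the singular part of $G$ yields exactly $\langle (x_j^n-x_{i_0}^n)-(x_k^n-x_{i_0}^n),\ x_k^n-x_j^n\rangle/|x_k^n-x_j^n|^2=-1$, while the smooth part $g$ contributes $o(1)$ via Lemma~\ref{lem:dni}(vi) combined with $|x_k^n-x_{i_0}^n|=o(d_{i_0}^n)$. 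All remaining terms of the Pohozaev sum are then $o(1)$: for $k\in I$, $j\notin I$ one has $|x_k^n-x_j^n|\gtrsim d_{i_0}^n$ by \eqref{eq:I}, whence $|\nabla_1 G(x_k^n,x_j^n)|=O(1/d_{i_0}^n)$, which is crushed by $|x_k^n-x_{i_0}^n|=o(d_{i_0}^n)$; the Robin derivatives obey $|\nabla h(x_k^n)|=O(1/d_{i_0}^n)$ by Lemma~\ref{lem:dni}(ii); and $\nabla f$ is bounded. The whole identity therefore collapses to
\[
A+\la_n B=o(1),\qquad
A:=\sum_{\mytop{i,j\in I}{i\ne j}}\Ga_i\Ga_j,\quad
B:=\sum_{\mytop{i,j\in I}{i\ne j}}|\Ga_i\Ga_j|,
\]
so that $\la_n=-A/B+o(1)$.

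The sign combinatorics then finish the proof. If $|I|\ge 3$, hypothesis \eqref{eq:hyp-1} forces $A<0$, hence $\la_n\to -A/B>0$, contradicting $\la_n\le 0$. Thus $|I|=2$, and writing $I=\{i,j\}$ one obtains $\la_n\to -\sign(\Ga_i\Ga_j)$; the case $\Ga_i\Ga_j<0$ gives $\la_n\to 1$, again contradicting $\la_n\le 0$, so necessarily $\Ga_i\Ga_j>0$ and $\la_n\to -1$. Under \eqref{eq:hyp-1} the only same-sign pairs among $\{1,2,3,4\}$ are $\{1,3\}$ and $\{2,4\}$, which is exactly the conclusion.

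The step I expect to be the main obstacle is not the final combinatorics but the error analysis in the Pohozaev sum: one must show, uniformly as the cluster approaches $\partial\Om$ (i.e.\ when $d_{i_0}^n\to 0$), that the smooth part of $G$, the cross terms between $I$ and its complement, the Robin derivatives, and the reflection correction $\psi$ of (A4) all produce genuine $o(1)$ contributions after contraction with $x_k^n-x_{i_0}^n$. This requires the full strength of Lemma~\ref{lem:dni} together with the defining scale separation $|x_k^n-x_{i_0}^n|=o(d_{i_0}^n)$ of $I$; once these bounds are in hand, the singular cancellation $-\frac{1}{2\pi}$ per pair is entirely algebraic and the lemma drops out.
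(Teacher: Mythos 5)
Your proposal is correct and is essentially the paper's own argument: the paper tests $\nabla H(x^n)=\la_n\nabla\Phi(x^n)$ against exactly the vector $z^n$ with $z^n_i=x^n_i-x^n_{i_0}$ for $i\in I$ and $0$ otherwise, obtains $\langle\nabla H(x^n),z^n\rangle=-\frac{1}{\pi}\sum_{i<j\in I}\Ga_i\Ga_j+o(1)$ and $\langle\nabla\Phi(x^n),z^n\rangle=\frac{1}{\pi}\sum_{i<j\in I}|\Ga_i\Ga_j|+o(1)$ by the same pairwise cancellation of the logarithmic singularity and the same $o(1)$ bounds from Lemma~\ref{lem:dni}, and then concludes $\la_n\to-A/B$ and the sign combinatorics exactly as you do. (Only a trivial bookkeeping slip: your displayed inner-product ratio equals $-1$ before, not after, absorbing the factor $-\frac{1}{2\pi}$ from the singular kernel, but your stated per-pair value $-\frac{1}{2\pi}$ and the resulting relation are the correct ones.)
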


\begin{proof}
We set
\[
z^n:=(z^n_1,\dots,z^n_4),\quad
z^n_i:=\begin{cases}x^n_i-x^n_{i_0}&i\in I,\\ 0&i\notin I,\end{cases}
\]
and compute, using (A1)--(A4), Lemma~\ref{lem:dni}, as well as \eqref{eq:i0} and \eqref{eq:I},
\[
\begin{aligned}
\langle\nabla H(x^n),z^n\rangle
 &= \sum_{i\in I}\Ga_i^2\langle\nabla h(x_i^n),z_i^n\rangle
    +2\sum_{i\in I}\sum_{j\ne i}\Ga_i\Ga_j
      \langle\pa_1g(x_i^n,x_j^n),z_i^n\rangle\\
 &\hspace{1cm}
  -\frac{1}{\pi}\sum_{i\in I}\sum_{j\neq i}\Ga_i\Ga_j
    \frac{\langle x_i^n-x_j^n,z_i^n\rangle}{|x_i^n-x_j^n|^2}\\
 &= -\frac{1}{\pi}\sum_{i,j\in I, i<j}\Ga_i\Ga_j
      \frac{\langle x_i^n-x_j^n,x_i^n-x_1^n\rangle}{|x_i^n-x_j^n|^2}+o(1)\\
 &= -\frac{1}{\pi}\sum_{i,j\in I, i<j}\Ga_i\Ga_j+o(1)
\end{aligned}
\]

Arguing in the same way, we also obtain that
\[
\langle\nabla \Phi(x^n), z^n\rangle
 = \frac1\pi\sum\limits_{i,j\in I, i<j} |\Ga_i\Ga_j|+o(1).
\]
Now the equation $\nabla H(x^n)=\la_n\nabla\Phi(x^n)$ implies
\[
0 \ge \la_n
  \to -\frac{\sum_{i,j\in I, i<j}\Ga_i\Ga_j}{\sum_{i,j\in I, i<j} |\Ga_i\Ga_j|}.
\]
This implies $\sum\limits_{i,j\in I, i<j}\Ga_i\Ga_j\ge0$, hence $|I|\le2$ by hypothesis \eqref{eq:hyp-1}. Now \eqref{eq:I} yields $|I|=2$, and since $\Ga_i\Ga_{i+1}<0$ we must have $I=\{1,3\}$ or $I=\{2,4\}$. We also obtain immediately $\la_n\to-1$.
\end{proof}

\begin{Lem}\label{lem:I2}
At least one of the following is true:
\begin{itemize}
\item[(i)] $I=\{1,3\}$ satisfies \eqref{eq:I} and $d_1^n\to0$.
\item[(ii)] $I=\{2,4\}$ satisfies \eqref{eq:I} and $d_2^n\to0$.
\end{itemize}
\end{Lem}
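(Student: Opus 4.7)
The plan is to argue by contradiction: suppose neither (i) nor (ii) holds. By Lemma~\ref{lem:I1} we have $I\in\{\{1,3\},\{2,4\}\}$, and by the symmetry of the setup under the relabelling $i\mapsto 5-i$ (which preserves \eqref{eq:hyp-1}, \eqref{eq:hyp-2}, $H$, and $\Phi$), I may assume WLOG $I=\{1,3\}$. The failure of (i) gives, along a subsequence, $d_1^n\to d^*>0$; by \eqref{eq:I} also $d_3^n\to d^*$, $x_1^n,x_3^n\to x^*\in\Om$, and $\rho_n:=|x_1^n-x_3^n|\to 0$. Since $2,4\notin I$, the points $x_2^n,x_4^n$ stay bounded away from $x^*$, so every mixed distance $|x_i^n-x_j^n|$ with $i\in\{1,3\}$ and $j\in\{2,4\}$ is bounded below.

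I first use the energy bound. Lemma~\ref{lem:dni}(iii),(iv) gives
\[
H(x^n)=2\Ga_1\Ga_3\, G(x_1^n,x_3^n)+\sum_{i\in\{2,4\}}\Ga_i^2 h(x_i^n)+2\Ga_2\Ga_4\, G(x_2^n,x_4^n)+O(1),
\]
where the leading term equals $-\tfrac{\Ga_1\Ga_3}{\pi}\log\rho_n+O(1)\to+\infty$ since $\Ga_1\Ga_3>0$ and $\rho_n\to 0$, while $2\Ga_2\Ga_4\, G(x_2^n,x_4^n)$ (with $\Ga_2\Ga_4>0$) cannot contribute $-\infty$. Boundedness of $H(x^n)$ therefore forces $h(x_i^n)\to-\infty$ for some $i\in\{2,4\}$; relabelling $2$ and $4$ if necessary (and running the argument below on the other index), I assume $d_2^n\to 0$.

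The crux is to show $|x_2^n-x_4^n|=o(d_2^n)$. Suppose not: along a subsequence $|x_2^n-x_4^n|\ge c\,d_2^n$ for some $c>0$. I write out $\nabla_{x_2}H(x^n)-\la_n\nabla_{x_2}\Phi(x^n)=0$. For $j=1,3$ we have $\Ga_2\Ga_j<0$, so $|\Ga_2\Ga_j|=-\Ga_2\Ga_j$ and the contribution $2\Ga_2\Ga_j\,\pa_1 G(x_2^n,x_j^n)$ appears identically in $\nabla_{x_2}H$ and $\nabla_{x_2}\Phi$, producing a factor $(1-\la_n)$ in the difference; for $j=4$ we have $\Ga_2\Ga_4>0$, so opposite signs in $H$ and $\Phi$ yield a factor $(1+\la_n)$. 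Collecting terms,
\[
(1-\la_n)\Ga_2^2\nabla h(x_2^n)+2(1+\la_n)\Ga_2\Ga_4\,\pa_1 G(x_2^n,x_4^n)+O(1)=0,
\]
the $O(1)$ absorbing $\pa_1 G(x_2^n,x_j^n)=O(1)$ for $j=1,3$ (by Lemma~\ref{lem:dni}(v), since $|x_2^n-x_j^n|$ and $|\bar x_2^n-x_j^n|$ are bounded below) and the bounded term $\nabla_{x_2}f$. Now Lemma~\ref{lem:dni}(ii) gives $\nabla h(x_2^n)=\nu_2^n/(2\pi d_2^n)+o(1/d_2^n)$, and Lemma~\ref{lem:dni}(v) combined with $|x_2^n-x_4^n|\ge c\,d_2^n$ and the comparable lower bound $|\bar x_2^n-x_4^n|\gtrsim d_2^n$ coming from $\bar x_2^n=x_2^n-2d_2^n\nu_2^n$ yields $|\pa_1 G(x_2^n,x_4^n)|=O(1/d_2^n)$. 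Multiplying the identity by $d_2^n$ and projecting onto $\nu_2^n$, the leading term $(1-\la_n)\Ga_2^2/(2\pi)\to\Ga_2^2/\pi\ne 0$ (using $\la_n\to-1$ from Lemma~\ref{lem:I1}) must be matched by $O(1+\la_n)+O(d_2^n)\to 0$ — a contradiction.

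Therefore $|x_2^n-x_4^n|=o(d_2^n)$, which forces $d_4^n/d_2^n\to 1$ and $d_4^n\to 0$. Taking $i_0':=2$ then yields $I':=\{2,4\}$ satisfying \eqref{eq:I} with $d_{i_0'}^n=d_2^n\to 0$, which is case (ii), contradicting the standing assumption. The main technical obstacle will be the estimate $|\pa_1 G(x_2^n,x_4^n)|=O(1/d_2^n)$ in the critical regime where $|x_2^n-x_4^n|$ is only comparable to $d_2^n$: this rests on the reflection identity for $\bar x_2^n$ together with assumption (A4), and is what makes the $(1+\la_n)$-term harmless after rescaling by $d_2^n$.
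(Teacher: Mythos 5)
Your proof is correct and follows essentially the same route as the paper's: rule out $d_1^n\not\to0$ by extracting $d_2^n\to0$ (or $d_4^n\to0$) from the energy bound, then use the $\nu_2^n$-projection of $\nabla H(x^n)=\la_n\nabla\Phi(x^n)$ together with $\la_n\to-1$ to force $|x_2^n-x_4^n|=o(d_2^n)$, hence that $\{2,4\}$ satisfies \eqref{eq:I}. (The only cosmetic point: the lower bound $|\bar x_2^n-x_4^n|\gtrsim d_2^n$ is best justified by noting $\bar x_2^n$ lies outside $\overline\Om$ at distance $\sim d_2^n$ from $\pa\Om$ while $x_4^n\in\Om$, rather than by the triangle inequality from $\bar x_2^n=x_2^n-2d_2^n\nu_2^n$, which fails when $|x_2^n-x_4^n|\le 2d_2^n$.)
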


\begin{proof}
Suppose $I_1=\{1,3\}$ satisfies \eqref{eq:I} but, after passing to a subsequence, $d^n_1\ge c>0$. Since $2,4\notin I_1$ there holds $|x^n_i-x^n_j|\ge c$ for $i\in\{1,3\}$, $j\in\{2,4\}$. Now $ H(x^n)=O(1)$ implies $h(x^n_2)\to-\infty$ or $h(x^n_4)\to-\infty$, hence $d^n_2\to0$ or $d^n_4\to0$. Assuming without loss of generality $d^n_2\to0$, we consider the equation
\[
\langle\pa_{x_2}\big( H(x^n)-\la_n\Phi(x^n)\big),\nu^n_2\rangle=0.
\]
Using
$\pa_1G(x^n_2,x^n_1)=O(1)=\pa_1G(x^n_2,x^n_3)$ and $\la_n\to-1$ we deduce
\[
(1-\la_n)\frac{\Ga_2^2}{2\pi d^n_2}+(1+\la_n)\Ga_2\Ga_4\pa_1G(x^n_2,x^n_4)+O(1)=0
\]
and therefore
\[
\frac{\Ga_2^2}{2\pi} + \Ga_2\Ga_4\frac{1+\la_n}{1-\la_n}
    \frac{\langle x^n_2-x^n_4,d^n_2\nu^n_2\rangle}{|x^n_2-x^n_4|^2}
 = o(1).
\]
This implies $|x^n_2-x^n_4|=o(d^n_2)$. Then \eqref{eq:I} holds for $I_2=\{2,4\}$, and $d^n_2\to0$.
\end{proof}

Without loss of generality we may now assume $I=\{1,3\}$ and $d^n_1\to0$. Thus there holds:
\begin{equation}\label{eq:dn13}
\left\{
\begin{aligned}
&\frac{d^n_1}{d^n_3}\to1;\quad |x^n_1-x^n_3|=o(d^n_1)=o(d^n_3);\\
&x^n_1,x^n_3\to p\in\pa\Om;\quad |x^n_i-x^n_1|\ge cd^n_1 \ \text{ for } i\in\{2,4\}.
\end{aligned}
\right.
\end{equation}
After passing to a subsequence we can also assume for $i\in\{1,3\}$:
\begin{equation}\label{eq:alphabeta}
\frac{d^n_1}{|x^n_i-x^n_2|}\to\al_1;\quad \frac{d^n_2}{|x^n_i-x^n_2|}\to\al_2;\quad
\frac{d^n_1}{|x^n_i-x^n_4|}\to\be_1;\quad \frac{d^n_2}{|x^n_i-x^n_4|}\to\be_2.
\end{equation}
Clearly we have $\al_1,\al_2,\be_1,\be_2\ge0$.

\begin{Lem}\label{lem:comp1}
Fix $i\in\{1,3\}$ and suppose $x^n_2\to p\in\pa\Om$. Then there holds:
\begin{itemize}
\item[(i)] $\displaystyle
 d^n_i\frac{\langle x^n_i-x^n_2,\nu^n_i\rangle}{|x^n_i-x^n_2|^2} \to \al_1(\al_1-\al_2)$
\item[(ii)] $\displaystyle
 d^n_i\frac{\langle x^n_i-\bar x^n_2,\nu^n_i\rangle}{|x^n_i-\bar x^n_2|^2}
  \to\frac{\al_1(\al_1+\al_2)}{1+4\al_1\al_2}$
\item[(iii)] $\displaystyle
 \frac{\langle\nu^n_2,x^n_2-x^n_i\rangle}{d^n_2} \to 1-\frac{\al_1}{\al_2}$
 provided $\al_2>0$.
\item[(iv)] $\displaystyle
\frac{\langle x^n_2-\bar x^n_i,x^n_2-x^n_i\rangle}{|x^n_2-\bar x^n_i|^2}
  \to \frac{1+2\al_1(\al_2-\al_1)}{1+4\al_1\al_2}$
\end{itemize}
\end{Lem}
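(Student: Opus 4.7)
The plan is to reduce everything to the boundary decomposition $x_i^n = p_i^n + d_i^n\nu_i^n$ (and analogously for $x_2^n$), together with the reflection formula $\bar x = x - 2d(x)\nu(x)$, and then to insert the definitions in \eqref{eq:alphabeta}. The $C^2$ regularity of $\partial\Omega$ will give me two small auxiliary estimates that do all the work:
(a) Lemma~\ref{lem:dni}(ix) which says $\langle p_i^n-p_2^n,\nu_i^n\rangle=O(|x_i^n-x_2^n|^2)$;
(b) the Lipschitz dependence of the unit normal on the base point, i.e.\ $|\nu_i^n-\nu_2^n|=O(|p_i^n-p_2^n|)=O(|x_i^n-x_2^n|)$, which implies $\langle\nu_i^n,\nu_2^n\rangle=1+O(|x_i^n-x_2^n|^2)$.

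For part (i), I would combine these two facts to expand
\[
\langle x_i^n-x_2^n,\nu_i^n\rangle
 =\langle p_i^n-p_2^n,\nu_i^n\rangle+d_i^n-d_2^n\langle\nu_2^n,\nu_i^n\rangle
 =(d_i^n-d_2^n)+O(|x_i^n-x_2^n|^2),
\]
multiply by $d_i^n/|x_i^n-x_2^n|^2$ and recognise $d_i^n/|x_i^n-x_2^n|\to\alpha_1$ (using $d_3^n/d_1^n\to1$ from \eqref{eq:dn13}) and $d_2^n/|x_i^n-x_2^n|\to\alpha_2$. The $O(|x_i^n-x_2^n|^2)$ error contributes $O(d_i^n)=o(1)$. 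This yields $\alpha_1^2-\alpha_1\alpha_2=\alpha_1(\alpha_1-\alpha_2)$.

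Parts (ii) and (iv) add the reflection: writing $\bar x_j^n=x_j^n-2d_j^n\nu_j^n$, the numerator receives an extra $+2d_2^n\langle\nu_2^n,\nu_i^n\rangle=2d_2^n+O(|x_i^n-x_2^n|^2)$ in (ii), respectively $+2d_i^n\langle\nu_i^n,x_2^n-x_i^n\rangle$ in (iv); using $\langle x_2^n-x_i^n,\nu_i^n\rangle=(d_2^n-d_i^n)+O(|x_i^n-x_2^n|^2)$ from (i) swapped, the numerator in (iv) becomes $|x_i^n-x_2^n|^2+2d_i^nd_2^n-2(d_i^n)^2+o(|x_i^n-x_2^n|^2)$. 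The denominator in both cases is evaluated by Lemma~\ref{lem:dni}(viii), giving $|x_i^n-\bar x_2^n|^2=|x_i^n-x_2^n|^2+4d_i^nd_2^n+o(|x_i^n-x_2^n|^2)$. Dividing and taking limits gives $\alpha_1(\alpha_1+\alpha_2)/(1+4\alpha_1\alpha_2)$ and $(1+2\alpha_1(\alpha_2-\alpha_1))/(1+4\alpha_1\alpha_2)$ respectively. Part (iii) is the analogue of (i) with the roles of $x_i^n$ and $x_2^n$ swapped: $\langle\nu_2^n,x_2^n-x_i^n\rangle=(d_2^n-d_i^n)+O(|x_i^n-x_2^n|^2)$, divided by $d_2^n$ gives $1-d_i^n/d_2^n+O(|x_i^n-x_2^n|^2/d_2^n)$; under the assumption $\alpha_2>0$ we have $|x_i^n-x_2^n|/d_2^n\to 1/\alpha_2$, so the error is $o(1)$, and $d_i^n/d_2^n\to\alpha_1/\alpha_2$.

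The only nontrivial step is really the uniform bookkeeping of the error terms: I have to be sure that $|x_i^n-x_2^n|^2$ errors are genuinely negligible compared with the terms that survive after multiplication by $d_i^n$ and division by $|x_i^n-x_2^n|^2$. The worst case would be if $\alpha_1$ or $\alpha_2$ vanished, but since the results are stated as pure limits (not as estimates), the formulas still hold in that degenerate case, as one checks by inspection. No deeper obstruction is expected beyond this careful accounting.
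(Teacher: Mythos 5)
Your proposal is correct and follows essentially the same route as the paper: both proofs reduce each quotient to the decomposition $x^n_j=p^n_j+d^n_j\nu^n_j$, $\bar x^n_j=x^n_j-2d^n_j\nu^n_j$, kill the tangential contribution via Lemma~\ref{lem:dni}(ix), evaluate the reflected denominators via Lemma~\ref{lem:dni}(viii), and then insert the limits \eqref{eq:alphabeta} (the paper uses only $\langle\nu^n_2-\nu^n_i,\nu^n_i\rangle\to0$ where you invoke the slightly sharper Lipschitz bound on the normal, but this changes nothing). Your error bookkeeping, including the treatment of the degenerate cases $\al_1=0$ or $\al_2=0$ and the role of the proviso $\al_2>0$ in (iii), matches what the paper does.
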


\begin{proof}
We compute using Lemma~\ref{lem:dni}:
\[
\begin{aligned}
d^n_i\frac{\langle x^n_i-x^n_2,\nu^n_i\rangle}{|x^n_i-x^n_2|^2}
 &= d^n_i\frac{\langle d^n_i\nu^n_i-d^n_2\nu^n_2,\nu^n_i\rangle}{|x^n_i-x^n_2|^2} + o(1)\\
 &= \frac{|d^n_i|^2}{|x^n_i-x^n_2|^2} -
    \frac{d^n_id^n_2}{|x^n_i-x^n_2|^2}\left(1+\langle\nu^n_2-\nu^n_i,\nu^n_i\rangle\right)
    + o(1)\\
 &\to \al_1(\al_1-\al_2),
\end{aligned}
\]
This proves (i). Next, (ii) follows from:
\[
\begin{aligned}
d^n_i\frac{\langle x^n_i-\bar x^n_2,\nu^n_i\rangle}{|x^n_i-\bar x^n_2|^2}
 &= d^n_i
    \frac{\langle d^n_i\nu^n_i+d^n_2\nu^n_2,\nu^n_i\rangle} {|x^n_i-x^n_2|^2+4d^n_id^n_2+o(|x^n_i-x^n_2|^2)}
    + o(1)\\
 &\to\frac{\al_1(\al_1+\al_2)}{1+4\al_1\al_2}\,.
\end{aligned}
\]
In order to see (iii) we calculate:
\[
\begin{aligned}
\frac{\langle\nu^n_2,x^n_2-x^n_i\rangle}{d^n_2}
 &= \frac{\langle\nu^n_2,p^n_2-p^n_i\rangle}{|x^n_2-\bar x^n_i|}\cdot
    \frac{|x^n_2-\bar x^n_i|}{d^n_2}
   + \frac{\langle\nu^n_2,d^n_2\nu^n_2-d^n_i\nu^n_i\rangle}{d^n_2}\\
 &= o(1)\cdot\frac{1}{\al_2} + 1 - \frac{d^n_i}{d^n_2}
    + \frac{d^n_i\langle\nu^n_2,\nu^n_2-\nu^n_i\rangle}{d^n_2}\\
 &\to 1-\frac{\al_1}{\al_2}
\end{aligned}
\]
Finally we prove (iv):
\[
\begin{aligned}
\frac{\langle x^n_2-\bar x^n_i,x^n_2-x^n_i\rangle}{|x^n_2-\bar x^n_i|^2}
 &= \frac{|x^n_2-\bar x^n_i|^2+\langle 2d^n_i\nu^n_i,x^n_2-x^n_i\rangle}
     {|x^n_2-\bar x^n_i|^2+4d^n_id^n_2+o(|x^n_2-x^n_i|^2)}\\
 &= \frac{|x^n_2-\bar x^n_i|^2+2d^n_i\langle\nu^n_i,d^n_2\nu^n_2-d^n_i\nu^n_i
           +o(|x^n_2-x^n_i|^2\rangle}
     {|x^n_2-\bar x^n_i|^2+4d^n_id^n_2+o(|x^n_2-x^n_i|^2)}\\
 &\to \frac{1+2\al_1(\al_2-\al_1)}{1+4\al_1\al_2}.
\end{aligned}
\]
\end{proof}

We also need the following equality:
\begin{equation}\label{eq:pa13}
\begin{aligned}
0 &= \langle\pa_{x_1}\left( H(x^n)-\la_n\Phi(x^n)\right),\nu^n_1\rangle
     + \langle\pa_{x_3}\left( H(x^n)-\la_n\Phi(x^n)\right),\nu^n_3\rangle\\
  &= (1-\la_n)\bigg(\frac{\Ga_1^2}{2\pi d^n_1} + \frac{\Ga_3^2}{2\pi d^n_3}
      - 2|\Ga_1\Ga_2|\langle\pa_1G(x^n_1,x^n_2),\nu^n_1\rangle
      - 2|\Ga_1\Ga_4|\langle\pa_1G(x^n_1,x^n_4),\nu^n_1\rangle\\
  &\hspace{2cm}
      - 2|\Ga_3\Ga_2|\langle\pa_1G(x^n_3,x^n_2),\nu^n_3\rangle
      - 2|\Ga_3\Ga_4|\langle\pa_1G(x^n_3,x^n_4),\nu^n_3\rangle\bigg)\\
  &\hspace{2cm}
      + 2(1+\la_n)\Ga_1\Ga_3\left(\langle\pa_1G(x^n_1,x^n_3),\nu^n_1\rangle
      + \langle\pa_1G(x^n_3,x^n_1),\nu^n_3\rangle\right)
\end{aligned}
\end{equation}

\begin{Lem}\label{lem:xn24}
$x^n_2\to p$ and $x^n_4\to p$ where $p\in\pa\Om$ is from \eqref{lem:I1}.
\end{Lem}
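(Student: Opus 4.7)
The plan is to argue by contradiction. I first show $x_2^n\to p$; the proof that $x_4^n\to p$ is identical after exchanging the indices $2$ and $4$ throughout. Suppose, after extracting a subsequence, $|x_2^n-p|\ge c>0$; combined with \eqref{eq:dn13} this gives $|x_i^n-x_2^n|\ge c/2$ for $i=1,3$, and Lemma~\ref{lem:dni}(iii)--(v) shows that all $x_2$-cross terms appearing in \eqref{eq:pa13} are $O(1)$. I then split on the behavior of $x_4^n$.

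\textbf{Sub-case A: $x_4^n\not\to p$.} The same reasoning yields that the $x_4$-cross terms in \eqref{eq:pa13} are also $O(1)$. Using Lemma~\ref{lem:dni}(ii), (vii) together with $d_1^n\sim d_3^n$, the equation reduces to
\[
(1-\la_n)\,\frac{\Ga_1^2+\Ga_3^2}{2\pi d_1^n}(1+o(1))+(1+\la_n)\,\frac{\Ga_1\Ga_3}{\pi d_1^n}(1+o(1))=O(1)+O(1-\la_n).
\]
Multiplying by $2\pi d_1^n\to 0$ and invoking $\la_n\to -1$ from Lemma~\ref{lem:I1}, the limit is $2(\Ga_1^2+\Ga_3^2)=0$, a contradiction.

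\textbf{Sub-case B: $x_4^n\to p$.} Then $|x_i^n-x_4^n|\to 0$ for $i=1,3$; along a subsequence set $\be_1:=\lim d_1^n/|x_1^n-x_4^n|$ and $\be_2:=\lim d_4^n/|x_1^n-x_4^n|$, the $x_4$-analogues of the parameters $\al_1,\al_2$ in \eqref{eq:alphabeta}. The computations of Lemma~\ref{lem:comp1}(i), (ii) carry over verbatim with $x_4$ and $\be_1,\be_2$ in place of $x_2$ and $\al_1,\al_2$, giving
\[
d_i^n\,\langle\pa_1 G(x_i^n,x_4^n),\nu_i^n\rangle\to -\frac{B_\be}{2\pi},\qquad B_\be:=\be_1(\be_1-\be_2)+\frac{\be_1(\be_1+\be_2)}{1+4\be_1\be_2},
\]
for $i=1,3$. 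Substituting into \eqref{eq:pa13} in place of the $x_4$-cross terms, multiplying by $2\pi d_1^n$, and taking the limit gives
\[
\Ga_1^2+\Ga_3^2+2(|\Ga_1|+|\Ga_3|)|\Ga_4|\,B_\be=0,
\]
which forces $B_\be<0$; a short algebraic manipulation then shows that this implies $\be_1>0$ and $2\be_2(\be_2-\be_1)>1$, and in particular $\be_2>\be_1$.

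For the contradiction I dot $\nabla(H(x^n)-\la_n\Phi(x^n))$ at $x_4^n$ with $\nu_4^n$. Since $|x_2^n-x_4^n|\ge c>0$, the only same-sign cross term $\Ga_2\Ga_4\,\pa_1 G(x_4^n,x_2^n)$ has coefficient $2(1+\la_n)=o(1)$ and contributes $o(1)$. The asymptotics of Lemma~\ref{lem:comp1} computed from the viewpoint of $x_4^n$ (which interchanges the roles of $\be_1$ and $\be_2$ in the key combination) yield
\[
d_4^n\,\langle\pa_1 G(x_4^n,x_i^n),\nu_4^n\rangle\to -\frac{B_\be'}{2\pi},\qquad B_\be':=\be_2(\be_2-\be_1)+\frac{\be_2(\be_1+\be_2)}{1+4\be_1\be_2}.
\]
Multiplying this equation by $2\pi d_4^n$ and passing to the limit gives $\Ga_4^2+2(|\Ga_1|+|\Ga_3|)|\Ga_4|\,B_\be'=0$, so $B_\be'<0$ and, by the analogous manipulation, $\be_1>\be_2$. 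This contradicts $\be_2>\be_1$ and closes Sub-case~B, so $x_2^n\to p$. The main obstacle is precisely Sub-case~B: the single identity \eqref{eq:pa13} does not rule out $B_\be<0$, and the proof hinges on producing the independent second constraint from $\langle\pa_{x_4}(H-\la_n\Phi),\nu_4^n\rangle=0$, which is only tractable because the $\Ga_2\Ga_4$ coupling carries the small factor $(1+\la_n)$ and the $\be_1\leftrightarrow \be_2$ symmetry in $B_\be,B_\be'$ is what finally breaks the system.
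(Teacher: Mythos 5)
Your reduction to showing $x_2^n\to p$ (with $x_4$ handled by exchanging indices) and your Sub-case A are sound and coincide with the paper's first step: if neither $x_2^n$ nor $x_4^n$ accumulates at $p$, multiplying \eqref{eq:pa13} by $2\pi d_1^n/(1-\la_n)$ yields $\Ga_1^2+\Ga_3^2=0$. The gap is in Sub-case B, and it originates in a sign error when you transport Lemma~\ref{lem:comp1}. Parts (i) and (ii) of that lemma give
$2\pi d_i^n\langle\pa_1G(x_i^n,x_4^n),\nu_i^n\rangle\to \frac{\be_1(\be_1+\be_2)}{1+4\be_1\be_2}-\be_1(\be_1-\be_2)$,
so the combination entering the limit of \eqref{eq:pa13} is $\be_1\bigl(\be_1-\be_2-\frac{\be_1+\be_2}{1+4\be_1\be_2}\bigr)$ — compare \eqref{eq:y1} with the $\al$-terms absent — whereas your $B_\be$ carries a plus sign in front of the image term. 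With the correct sign, "$B_\be<0$" unwinds to $\be_1,\be_2>0$ and $2\be_1(\be_1-\be_2)<1$, which does \emph{not} imply $\be_2>\be_1$; the second equation, from $\langle\pa_{x_4}(H-\la_n\Phi),\nu_4^n\rangle=0$, likewise only gives $2\be_2(\be_2-\be_1)<1$. These two inequalities are simultaneously satisfiable (e.g.\ $\be_1=\be_2$), so the contradiction you rely on evaporates.

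This is not a repairable slip within your scheme: two scalar constraints on the two unknowns $\be_1,\be_2$ are generically solvable, so no argument using only the two normal-direction pairings can work for arbitrary admissible vorticities. The paper closes the corresponding case (stated there as $x_2^n\to p$, $|x_1^n-x_4^n|\ge c$, with $\al$'s in place of your $\be$'s) by adding a third, tangential equation $\langle\pa_{x_2}(H(x^n)-\la_n\Phi(x^n)),x_2^n-x_1^n\rangle=0$, evaluated via Lemma~\ref{lem:comp1}(iii),(iv) to give \eqref{eq:x3}, and then observing that the linear combination $\al_2\cdot\eqref{eq:x1}+\al_1\cdot\eqref{eq:x2}+\al_2\cdot\eqref{eq:x3}$ collapses to $(\Ga_1^2+\Ga_2^2+\Ga_3^2)\al_2=0$, which is impossible since \eqref{eq:x1} forces $\al_2>0$. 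You would need the analogous third equation in the $\be$-variables and the same cancellation to finish your Sub-case B.
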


\begin{proof}
Suppose $|x^n_2-x^n_1|\ge c>0$ and $|x^n_4-x^n_1|\ge c>0$ along a subsequence, hence $\pa_1G(x^n_i,x^n_j)=O(1)$ for $i\in\{1,3\}$, $j\in\{2,4\}$. Multiplying \eqref{eq:pa13} by $\frac{2\pi d^n_1}{1-\la_n}$, and using Lemma~\ref{lem:dni}, \eqref{eq:dn13} and $\la_n\to-1$, we obtain the contradiction:
\[
\begin{aligned}
0 &= \Ga_1^2+\Ga_3^2\frac{d^n_1}{d^n_3}
      + 2\frac{1+\la_n}{1-\la_n}\Ga_1\Ga_3d^n_1(d^n_1+d^n_3)
      \left(\frac{1}{|\bar x^n_1-x^n_3|^2}+\frac{1}{|\bar x^n_3-x^n_1|^2}\right)+o(1)\\
  &\to \Ga_1^2+\Ga_3^2.
\end{aligned}
\]
Therefore we may assume that $x^n_2\to p$. Suppose $|x^n_1-x^n_4|\ge c>0$ along a subsequence, hence $\pa_1G(x^n_i,x^n_4)=O(1)$ for $i\in\{1,3\}$. As above we multiply \eqref{eq:pa13} by $\frac{2\pi d^n_1}{1-\la_n}$ and obtain:
\[
\begin{aligned}
0 &= \Ga_1^2+\Ga_3^2\frac{d^n_1}{d^n_3}
     + 2|\Ga_1\Ga_2|d^n_1\left(\frac{\langle x^n_1-x^n_2,\nu^n_1\rangle}{|x^n_1-x^n_2|^2}
     -\frac{\langle x^n_1-\bar x^n_2,\nu^n_1\rangle}{|x^n_1-\bar x^n_2|^2}\right)\\
&\hspace{1cm}
     + 2|\Ga_3\Ga_2|d^n_1\left(\frac{\langle x^n_3-x^n_2,\nu^n_3\rangle}{|x^n_3-x^n_2|^2}
     -\frac{\langle x^n_3-\bar x^n_2,\nu^n_3\rangle}{|x^n_3-\bar x^n_2|^2}\right) + o(1)
\end{aligned}
\]
Passing to the limit now implies:
\begin{equation}\label{eq:x1}
\Ga_1^2+\Ga_3^2+2|\Ga_2|(|\Ga_1|+|\Ga_3|)\al_1
 \left(\al_1-\al_2-\frac{\al_1+\al_2}{1+4\al_1\al_2}\right) = 0.
\end{equation}
We used Lemma~\ref{lem:comp1} for this computation. Observe that \eqref{eq:x1} implies $\al_1,\al_2>0$.

We also have
\[
\begin{aligned}
0 &= \langle\pa_{x_2}\left( H(x^n)-\la_n\Phi(x^n)\right),\nu^n_2\rangle\\
  &= (1-\la_n)\left(\frac{\Ga_2^2}{2\pi d^n_2}
      - 2|\Ga_1\Ga_2|\langle\pa_1G(x^n_2,x^n_1),\nu^n_2\rangle
      - 2|\Ga_2\Ga_3|\langle\pa_1G(x^n_2,x^n_3),\nu^n_2\rangle\right)\\
  &\hspace{2cm}
      + 2(1+\la_n)\Ga_2\Ga_4\langle\pa_1G(x^n_2,x^n_4),\nu^n_2\rangle.
\end{aligned}
\]
Since we know $x^n_1,x^n_2,x^n_3\to p$ and since we are assuming $|x^n_1-x^n_4|\ge c>0$ we have $\pa_1G(x^n_2,x^n_4)=O(1)$. Therefore multiplying the above equation by
$\frac{2\pi d^n_2}{1-\la_n}$ we obtain as before
\[
\begin{aligned}
0 &= \Ga_2^2
     + 2|\Ga_1\Ga_2|d^n_2\left(\frac{\langle x^n_2-x^n_1,\nu^n_2\rangle}{|x^n_2-x^n_1|^2}
     -\frac{\langle x^n_2-\bar x^n_1,\nu^n_2\rangle}{|x^n_2-\bar x^n_1|^2}\right)\\
&\hspace{1cm}
     + 2|\Ga_2\Ga_3|d^n_2\left(\frac{\langle x^n_2-x^n_3,\nu^n_2\rangle}{|x^n_2-x^n_3|^2}
     -\frac{\langle x^n_2-\bar x^n_3,\nu^n_2\rangle}{|x^n_2-\bar x^n_3|^2}\right) + o(1).
\end{aligned}
\]
Again we pass to the limit and deduce:
\begin{equation}\label{eq:x2}
\Ga_2^2+2|\Ga_2|(|\Ga_1|+|\Ga_3|)\al_2
 \left(\al_2-\al_1-\frac{\al_1+\al_2}{1+4\al_1\al_2}\right) = 0.
\end{equation}
As before we used Lemma~\ref{lem:comp1} for this computation. We need one more equation which comes from
\[
\begin{aligned}
0 &= \langle\pa_{x_2}\left( H(x^n)-\la_n\Phi(x^n)\right),x^n_2-x^n_1\rangle\\
  &= (1-\la_n)\bigg(\frac{\Ga_2^2\langle\nu^n_2,x^n_2-x^n_1\rangle}{2\pi d^n_2}
      - 2|\Ga_1\Ga_2|\langle\pa_1G(x^n_2,x^n_1),x^n_2-x^n_1\rangle\\
  &\hspace{1cm}
      - 2|\Ga_2\Ga_3|\langle\pa_1G(x^n_2,x^n_3),x^n_2-x^n_1\rangle\bigg)
      + 2(1+\la_n)\Ga_2\Ga_4\langle\pa_1G(x^n_2,x^n_4),x^n_2-x^n_1\rangle.
\end{aligned}
\]
Since $\pa_1G(x^n_2,x^n_4)=O(1)$ we get
\[
\begin{aligned}
0 &= \Ga_2^2\frac{\langle\nu^n_2,x^n_2-x^n_1\rangle}{d^n_2}
     - 2|\Ga_1\Ga_2|\left(-\frac{\langle x^n_2-x^n_1,x^n_2-x^n_1\rangle}{|x^n_2-x^n_1|^2}
     + \frac{\langle x^n_2-\bar x^n_1,x^n_2-x^n_1\rangle}{|x^n_2-\bar x^n_1|^2}\right)\\
&\hspace{1cm}
     - 2|\Ga_2\Ga_3|\left(-\frac{\langle x^n_2-x^n_3,x^n_2-x^n_1\rangle}{|x^n_2-x^n_3|^2}
     + \frac{\langle x^n_2-\bar x^n_3,x^n_2-x^n_1\rangle}{|x^n_2-\bar x^n_3|^2}\right)
     + o(1).
\end{aligned}
\]
Passing to the limit yields
\begin{equation}\label{eq:x3}
\Ga_2^2\left(1-\frac{\al_1}{\al_2}\right)
 + 4|\Ga_2|(|\Ga_1|+|\Ga_3|)\al_1\frac{\al_1+\al_2}{1+4\al_1\al_2} = 0.
\end{equation}
The system \eqref{eq:x1}, \eqref{eq:x2}, \eqref{eq:x3} has no solutions because
$\al_2\cdot\eqref{eq:x1}+\al_1\cdot\eqref{eq:x2}+\al_2\cdot\eqref{eq:x3}$ leads to
$(\Ga_1^2+\Ga_2^2+\Ga_3^2)\al_2=0$ which contradicts $\Ga_i\ne0$, $\al_2>0$.
\end{proof}

Now we use \eqref{eq:pa13} again. The same arguments as in the derivation of \eqref{eq:x1} lead to
\begin{equation}\label{eq:y1}
\begin{aligned}
&\Ga_1^2+\Ga_3^2+2|\Ga_2|(|\Ga_1|+|\Ga_3|)\al_1
 \left(\al_1-\al_2-\frac{\al_1+\al_2}{1+4\al_1\al_2}\right)\\
&\hspace{1cm}
 +2|\Ga_4|(|\Ga_1|+|\Ga_3|)\be_1
 \left(\be_1-\be_2-\frac{\be_1+\be_2}{1+4\be_1\be_2}\right)= 0.
\end{aligned}
\end{equation}
The additional term involving $\be_1,\be_2$ comes from the fact that $x^n_4\to p$. In the derivation of \eqref{eq:x1} we assumed $|x^n_4-x^n_1|\ge c>0$. This implies $\be_1=0$, hence \eqref{eq:x1} is a special case of \eqref{eq:y1}. We need to distinguish two cases:

{\sc Case 1:} $\liminf\frac{|x^n_2-x^n_4|}{d^n_2} = 0$

{\sc Case 2:} $\liminf\frac{|x^n_2-x^n_4|}{d^n_2} > 0$

In {\sc Case 1}, after passing to a subsequence we may assume that $|x^n_2-x^n_4|=o(d^n_2)$. This implies $\frac{d^n_2}{d^n_4}\to 1$, $\be_1=\al_1$ and $\be_2=\al_2$. Therefore \eqref{eq:y1} reduces to
\begin{equation}\label{eq:z1}
\Ga_1^2+\Ga_3^2+2(|\Ga_2|+|\Ga_4|)(|\Ga_1|+|\Ga_3|)\al_1
 \left(\al_1-\al_2-\frac{\al_1+\al_2}{1+4\al_1\al_2}\right)
= 0.
\end{equation}
Moreover, in {\sc Case 1} \eqref{eq:I} is also satisfied for $I=\{2,4\}$. Thus we obtain
\begin{equation}\label{eq:z2}
\Ga_2^2+\Ga_4^2+2(|\Ga_1|+|\Ga_3|)(|\Ga_2|+|\Ga_4|)\al_2
 \left(\al_2-\al_1-\frac{\al_1+\al_2}{1+4\al_1\al_2}\right)
= 0
\end{equation}
in the same way as \eqref{eq:z1}. We need one more equation which comes from
\[
0 = \langle\pa_{x_2}\left( H(x^n)-\la_n\Phi(x^n)\right),\nu^n_2\rangle
     + \langle\pa_{x_4}\left( H(x^n)-\la_n\Phi(x^n)\right),\nu^n_4\rangle
\]
Similar computations as before lead to
\begin{equation}\label{eq:z3}
\big(\Ga_2^2+\Ga_4^2\big)\left(1-\frac{\al_1}{\al_2}\right)
 + 2\big(|\Ga_2|+|\Ga_4|\big)\big(|\Ga_1|+|\Ga_3|\big)2\al_1
 \frac{\al_1+\al_2}{1+4\al_1\al_2}
= 0
\end{equation}
Now $\al_2\cdot\eqref{eq:z1}+\al_1\cdot\eqref{eq:z2}+\al_2\cdot\eqref{eq:z3}$ leads to
$(\Ga_1^2+\Ga_2^2+\Ga_3^2+\Ga_4^2)\al_2=0$ which contradicts $\Ga_i\ne0$, $\al_2>0$.

In {\sc Case 2} we have $|x^n_2-x^n_4|\ge cd^n_2$ and $|x^n_2-x^n_4|\ge cd^n_4$. This implies
\[
\langle\pa_1G(x^n_2,x^n_4),x^n_2-x^n_1\rangle = O(1)
 = \langle\pa_1G(x^n_4,x^n_2),x^n_2-x^n_1\rangle.
\]
Then the equation
\[
\langle\pa_{x_2}\left( H(x^n)-\la_n\Phi(x^n)\right),x^n_2-x^n_1\rangle = 0
\]
leads to
\begin{equation}\label{eq:y2}
\Ga_2^2\left(1-\frac{\al_1}{\al_2}\right)
 + 2|\Ga_2|\big(|\Ga_1|+|\Ga_3|\big)2\al_1
 \frac{\al_1+\al_2}{1+4\al_1\al_2}
= 0.
\end{equation}
Analogously, the equation
\[
\langle\pa_{x_4}\left( H(x^n)-\la_n\Phi(x^n)\right),x^n_4-x^n_1\rangle = 0
\]
leads to
\begin{equation}\label{eq:y3}
\Ga_4^2\left(1-\frac{\be_1}{\be_2}\right)
 + 2|\Ga_4|\big(|\Ga_1|+|\Ga_3|\big)2\be_1
 \frac{\be_1+\be_2}{1+4\be_1\be_2}
= 0.
\end{equation}
Finally the equations \[
\langle\pa_{x_4}\left( H(x^n)-\la_n\Phi(x^n)\right),\nu^n_4\rangle = 0
\]
and
\[
\langle\pa_{x_4}\left( H(x^n)-\la_n\Phi(x^n)\right),\nu^n_4\rangle = 0
\]
lead, respectively, to
\begin{equation}\label{eq:y4}
\Ga_2^2+2|\Ga_2|(|\Ga_1|+|\Ga_3|)\al_2
 \left(\al_2-\al_1-\frac{\al_1+\al_2}{1+4\al_1\al_2}\right) = 0.
\end{equation}
and
\begin{equation}\label{eq:y5}
\Ga_4^2+2|\Ga_4|(|\Ga_1|+|\Ga_3|)\al_2
 \left(\be_2-\be_1-\frac{\be_1+\be_2}{1+4\be_1\be_2}\right) = 0.
\end{equation}
Now the sum
$\al_2\be_2\cdot\eqref{eq:y1}+\al_2\be_2\cdot\eqref{eq:y2}+\al_1\be_2\cdot\eqref{eq:y3}
+\al_2\be_1\cdot\eqref{eq:y4}+\al_2\be_2\cdot\eqref{eq:y5}$ leads to
$(\Ga_1^2+\Ga_2^2+\Ga_3^2+\Ga_4^2)\al_2\be_2=0$ which as before contradicts $\Ga_i\ne0$, $\al_2,\be_2>0$.
This concludes the proof of Proposition~\ref{prop:com}.

\section{Proof of Theorems~\ref{thm:N=2}-\ref{thm:N=4}}\label{sec:proofs}
\begin{altproof}{Theorem~\ref{thm:N=2}}
There exists a compact subset $K_0\subset\cF_2\Om$ such that $\cat(K_0)=\cat(\cF_2\Om)$. Observe that
\[
 H(x)=\Ga_1^2h(x_1)+\Ga_2^2h(x_2) + \Ga_1\Ga_2G(x_1,x_2)+f(x)\to-\infty \qquad
\text{as }x\to\pa\cF_2\Om
\]
because $\Ga_1\Ga_2<0$, $f(x)=O(1)$, and assumption (A2). Therefore
$H^{\ge a}=\{x\in\cF_2\Om: H(x)\ge a\}$ is compact for any $a\in\R$. Now we choose
$a<\min H(K_0)$, set $\de:=\frac12(\min H(K_0)-a)$, and consider $F$ on the compact manifold $K= H^{\ge a}$ with boundary $B = H^{-1}(a)$. Since $\min F(K) > \max F(B)$ standard critical point theory yields that a function $F\in\cC^1(\conf)$ with $\|F|_K-H|_K\| < \de$ has at least $\cat(\cF_2\Om)$ critical points in $K$. This proves a).

Part b) follows similarly upon passing to the quotient $\cF_2(\Om)/(x_1,x_2)\sim(x_2,x_1)$. Finally, c) is obvious.
\end{altproof}

The proof of Theorem~\ref{thm:N=3} and of Theorem \ref{thm:N=4} will be based on a linking argument. In the sequel $N$ will be either $3$ or $4$. Suppose there exists a (sequentially) compact topological space $S$, a continuous map $\ga_0:S\to\conf$, and a subset $\cL \subset \conf$ such that
\begin{equation}\label{eq:bound}
\sup_{x\in\cL} H(x) < \infty,
\end{equation}
and
\begin{equation}\label{eq:linking}
 \text{$\ga$ is homotopic to $\ga_0$}\qquad\Longrightarrow\qquad \ga(S)\cap\cL\ne\emptyset.
\end{equation}
As usual, $\ga$ being homotopic to $\ga_0$ means that there exists a continuous deformation $H:S\times[0,1]\to\conf$ with $H(\ze,0)=\ga_0(\ze)$ and $H(\ze,1)=\ga(\ze)$ for all $\ze\in S$. We shall prove that if a function $F\in\cC^1(\conf,\R)$ is close to $ H$ on compact sets then it has a critical point. In order to express the closeness we choose
$a < \min_{\ze\in S} H(\ga_0(\zeta))$ and $b > \sup_{x\in\cL} H(x)$. Let $M_0$ be as in Proposition~\ref{prop:com} for these values $a<b$. By Sard's theorem we may assume that $-M_0$ is a regular values of $\Phi$. Since $S$ is sequentially compact we may also assume that $-M_0 < \inf_{\ze\in S}\Phi(\ze)$. Setting
\[
V_\Om(x):=\nabla H(x)
 - \frac{\langle\nabla H(x),\nabla\Phi(x)\rangle}{|\nabla\Phi(x)|^2}
    \nabla\Phi(x)
\]
Proposition~\ref{prop:com} implies
\[
a\le H(x)\le b,\ \Phi(x)=-M_0,\ \langle\nabla H(x),\nabla\Phi(x)\rangle \le 0
\qquad\Longrightarrow\qquad V_\Om(x)\ne0.
\]
Observe that $D:=\{x\in\conf:\Phi(x)\ge-M_0\}$ is a compact manifold with smooth boundary $\pa D=\Phi^{-1}(-M_0)$. We also define
\[
D_a^b:=\{x\in D:a\le H(x)\le b\} = \{x\in\conf:\Phi(x)\ge-M_0,\ a\le H(x)\le b\}.
\]
Now we choose $\eps>0$ satisfying
\[
a+2\eps < \min_{\ze\in S} H(\ga_0(\zeta)) \le \sup_{x\in\cL} H(x) < b-2\eps
\]
and
\begin{equation}\label{eq:eps}
\eps < \frac12\min\{|V_\Om(x)|: a\le H(x)\le b,\ \Phi(x)=-M_0,\
         \langle\nabla H(x),\nabla\Phi(x)\rangle \le 0\}.
\end{equation}

\begin{Prop}\label{prop:crit}
Suppose $F\in\cC^1(\conf,\R)$ satisfies
\begin{equation}\label{eq:F-cond1}
|F(x)- H(x)|\le\eps\quad\text{if }x\in K:=D_a^b,
\end{equation}
and
\begin{equation}\label{eq:F-cond2}
|\nabla F(x)-\nabla H(x)|\le\eps\quad\text{if }x\in D_a^b\cap\pa D.
\end{equation}
Then $F$ has a critical point in $K=D_a^b$.
\end{Prop}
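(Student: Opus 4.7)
The plan is a standard minimax/deformation scheme adapted to the constrained region $K=D_a^b$, where the constraint $\Phi\ge -M_0$ is respected by projecting gradients onto $T\partial D$ near the boundary. The argument has three parts: first, set up a minimax value $c$ of $F$ with $c\in(a+\eps,b-\eps)$; second, assume no critical point in $K$ and build a pseudo-gradient vector field $W$ with $\langle\nabla\Phi,W\rangle\ge 0$ on $\partial D\cap K$; third, apply the standard deformation lemma to reach a contradiction.

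For the minimax, let $\Gamma$ denote the class of continuous maps $\gamma:S\to\conf$ homotopic to $\gamma_0$, and set
$$c:=\sup_{\gamma\in\Gamma}\inf_{\zeta\in S}F(\gamma(\zeta)).$$
Using \eqref{eq:F-cond1} on the compact set $\gamma_0(S)\subset K$ together with $\min_\zeta H(\gamma_0(\zeta))>a+2\eps$, one obtains $c\ge\inf_\zeta F(\gamma_0(\zeta))>a+\eps$. The linking \eqref{eq:linking} forces each $\gamma\in\Gamma$ to meet $\cL$ at some $\zeta^*\in S$, so $\inf_\zeta F(\gamma(\zeta))\le F(\gamma(\zeta^*))\le\sup_\cL H+\eps<b-\eps$, hence $c\le b-\eps$.

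Suppose $F$ has no critical point in $K$. I would construct a locally Lipschitz $W:K\to\R^{2N}$ with $\langle\nabla F,W\rangle>0$ on $K$ and $\langle\nabla\Phi,W\rangle\ge 0$ on $\partial D\cap K$, so that $K$ is forward invariant under the flow of $W$. Away from $\partial D$ one takes $W=\nabla F$. Near $\partial D$ one combines two candidates: the tangential projection $V_F:=\nabla F-\langle\nabla F,\nabla\Phi\rangle|\nabla\Phi|^{-2}\nabla\Phi$ (automatically tangent to $\partial D$) and the unmodified $\nabla F$ (which points into $D$ precisely where $\langle\nabla F,\nabla\Phi\rangle>0$). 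The crux, and this is where Proposition~\ref{prop:com} enters essentially, is to verify that the open sets
$$U_1:=\{|V_\Omega|>2\eps\}\quad\text{and}\quad U_2:=\{\langle\nabla H,\nabla\Phi\rangle>\eta_0\}$$
cover $\partial D\cap H^{-1}([a,b])$ for a suitable $\eta_0>0$. Note \eqref{eq:eps} gives $U_1\supset\{\langle\nabla H,\nabla\Phi\rangle\le 0\}$, and if the covering failed there would be a sequence $x_n$ with $\langle\nabla H,\nabla\Phi\rangle(x_n)\to 0^+$ and $|V_\Omega(x_n)|\le 2\eps$; a limit point would then satisfy $V_\Omega(x^*)=0$ and $\langle\nabla H,\nabla\Phi\rangle(x^*)=0$, i.e.\ $\nabla H(x^*)=\la\nabla\Phi(x^*)$ with $\la=0$, contradicting Proposition~\ref{prop:com}. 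The closeness \eqref{eq:F-cond2} then transfers the covering to $F$: on $U_1$, $|V_F|\ge|V_\Omega|-\eps>\eps$; on $U_2$, $\langle\nabla F,\nabla\Phi\rangle\ge\eta_0-\eps|\nabla\Phi|>0$, provided $\eps$ is sufficiently small. A partition of unity $\chi_1+\chi_2=1$ subordinate to $\{U_1,U_2\}$ on $\partial D\cap K$ then yields $W_\partial:=\chi_1 V_F+\chi_2\nabla F$, satisfying $\langle\nabla F,W_\partial\rangle=\chi_1|V_F|^2+\chi_2|\nabla F|^2>0$ and $\langle\nabla\Phi,W_\partial\rangle=\chi_2\langle\nabla F,\nabla\Phi\rangle\ge 0$; smoothly interpolating with $\nabla F$ in the interior of $K$ via a cutoff in $\Phi$ produces $W$.

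The standard deformation lemma applied to the flow of $W$ yields $T,\delta>0$ with $\eta_T(\{F\ge c-\delta\}\cap K)\subset\{F\ge c+\delta\}$. Picking $\gamma\in\Gamma$ with $\inf_\zeta F(\gamma(\zeta))>c-\delta$ and extending $\eta_T$ by the identity outside a neighborhood of $K$ via a Lipschitz cutoff, the map $\tilde\gamma:=\eta_T\circ\gamma$ lies in $\Gamma$ and satisfies $\inf_\zeta F(\tilde\gamma(\zeta))\ge c+\delta$, contradicting the definition of $c$. Hence $F$ has a critical point in $K$. The main obstacle is the construction of $W$ near $\partial D$ and in particular the non-vanishing covering argument above, which relies crucially on Proposition~\ref{prop:com}; the rest is routine.
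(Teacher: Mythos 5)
Your construction of the constrained pseudo-gradient field is sound and close in spirit to the paper's: the paper defines $V_0$ on $D_a^b\cap\pa D$ by a case distinction on the sign of $\langle\nabla F,\nabla\Phi\rangle$ and then interpolates with $\nabla F$ across a tubular neighbourhood of $\pa D$, whereas you use a partition of unity subordinate to the cover $\{U_1,U_2\}$; both hinge on Proposition~\ref{prop:com} through \eqref{eq:eps} in the same way, and your compactness argument producing $\eta_0$ is a legitimate (indeed slightly more careful) way to guarantee non-degeneracy of the field on all of $\pa D\cap D_a^b$, at the mild cost of an extra smallness requirement $\eps<\eta_0/\sup_{\pa D\cap D_a^b}|\nabla\Phi|$.

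The genuine gap is in your final minimax step, and it is exactly the point where the paper takes a different route. You only control $F$ on the compact set $K=D_a^b$ (and $\nabla F$ only on $K\cap\pa D$); outside $K$ the function $F$ is arbitrary. Consequently: (i) your bound $c\le\sup_\cL H+\eps$ is unjustified, because the intersection point $\gamma(\zeta^*)\in\cL$ guaranteed by \eqref{eq:linking} need not lie in $K$ (the sets $\cL_3(\Om)$, $\cL_4(\Om)$ reach $\pa\conf$, where $\Phi<-M_0$), so \eqref{eq:F-cond1} does not apply there and $F(\gamma(\zeta^*))$ can be as large as you like; and (ii) even granting a finite $c$, a near-optimal competitor $\gamma\in\Gamma$ may have points $\gamma(\zeta)\notin K$ with $F(\gamma(\zeta))\in(c-\delta,c+\delta)$; your deformation is supported near $K$, leaves such points fixed, and therefore does not yield $\inf_\zeta F(\tilde\gamma(\zeta))\ge c+\delta$. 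There is no Palais--Smale condition and no control of $F$ on $\conf\setminus K$ to repair this. The paper avoids the problem by never introducing arbitrary competitors: it flows only the fixed map $\gamma_0$ (whose image lies in $\opint D$ with $H>a+2\eps$) forward for all time, uses \eqref{eq:linking} to find $\zeta_n\in S$ with $\vphi(\gamma_0(\zeta_n),n)\in\cL$, where $H\le\sup_\cL H<b-2\eps$ by Lemma~\ref{lem:3d}, and then uses sequential compactness of $S$ and monotonicity of $F$ along orbits to extract a single orbit that remains in $D_a^b$ for all $t\ge0$ and hence accumulates at a critical point of $F$. You should replace your deformation-lemma conclusion by this ``flow $\gamma_0$ forever and pass to a limit'' argument; the vector field you built can be reused verbatim.
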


Clearly \eqref{eq:F-cond1} requires $F$ to be $\cC^0$-close to $H$ on the compact set $D_a^b$, and \eqref{eq:F-cond2} requires $F$ to be $\cC^1$-close to $H$ on the compact set
$D_a^b\cap\pa D$.

\begin{proof}
We assume that $F$ has no critical value in $D_a^b$. First we define a continuous map $V_0:D_a^b\cap\pa D\to\R^{2N}$ by setting:
\[
V_0(x):=\begin{cases}
  \nabla F(x) - \frac{\langle\nabla F(x),\nabla\Phi(x)\rangle}{|\nabla\Phi(x)|^2}
      \nabla\Phi(x)
 &\text{if }\langle\nabla F(x),\nabla\Phi(x)\rangle\le0;\\
  \nabla F(x) &\text{else.}
  \end{cases}
\]
Clearly we have
\begin{equation}\label{eq:V_0}
\langle V_0(x),\nabla\Phi(x)\rangle \ge 0\quad\text{for all } x\in D_a^b\cap\pa D,
\end{equation}
hence $V_0(x)$ is either tangent to $\pa D$ at $x$ or points inside $D$. Using \eqref{eq:eps} and \eqref{eq:F-cond2} it is easy to check that
\begin{equation}\label{eq:F-pseudo}
\langle\nabla F(x),V_0(x)\rangle > 0\quad
 \text{if }x\in D_a^b\cap\pa D.
\end{equation}
Next we extend this vector field to all of $\conf$. In order to do this we first choose a relatively open tubular neighborhood $\pa D\subset\cO\subset D$ of $\pa D$ and a diffeomorphism $\chi=(\chi_1,\chi_2):\cO\to\pa D\times [0,1)$ such that $\chi(x)=(x,0)$ for $x\in\pa D$. Then we define for $0<\de<1$ a map $V_1:D_a^b\to\R^{2N}$ by setting
\[
V_1(x):=\begin{cases}
\frac{\de-\chi_2(x)}{\de}V_0(\chi_1(x))+\frac{\chi_2(x)}{\de}\nabla F(x)\quad
&\text{if }x\in D_a^b\cap\cO,\ \chi_2(x)\le\de;\\
\nabla F(x)&\text{if }x\in D_a^b\cap\cO,\ \chi_2(x)>\de, \text{ or }x\in D_a^b\setminus\cO.
\end{cases}
\]
Observe that $V_1$ is continuous and coincides with $V_0$ on $D_a^b\cap\pa D$. Therefore, if $\de>0$ is small \eqref{eq:F-pseudo} implies that
\begin{equation}\label{eq:V}
\langle\nabla F(x),V_1(x)\rangle > 0 \quad \text{if }x\in D_a^b.
\end{equation}
Here we also used that $F$ has no critical point in $D_a^b$. We fix such a $\de>0$. Then we replace the continuous vector field $V_1:D_a^b\to\R^{2N}$ by a Lipschitz continuous vector field $V_F:D_a^b\to\R^{2N}$ such that \eqref{eq:V_0} and \eqref{eq:V} continue to hold for $V_F$ instead of $V_0$, $V_1$. Finally we extend the vector field $V_F:D_a^b\to\R^{2N}$ to a Lipschitz continuous vector field $V_F:\conf\to\R^{2N}$ such that $V_F(x)=0$ outside a neighborhood of $D_a^b$, and such that $\langle\nabla F(x),V_F(x)\rangle \ge 0$ for all $x\in\conf$. As a consequence, $V_F$ defines a global flow $\vphi:\conf\times\R\to\conf$ which satisfies:
\begin{equation}\label{eq:vphi-1}
x\in D,\ a\le H(\vphi(x,t))\le b\text{ for }0\le t\le T
\qquad\Longrightarrow\qquad
\vphi(x,T)\in D
\end{equation}
and
\begin{equation}\label{eq:vphi-2}
\left\{
\begin{aligned}
&x\in D,\ a\le H(\vphi(x,t))\le b\text{ for all } t\ge 0,\\
&\hspace{1cm}\qquad\Longrightarrow\qquad
 \vphi(x,t_n))\to \bar x\text{ for some sequence } t_n\to\infty,\ \nabla F(\bar x)=0.
\end{aligned}
\right.
\end{equation}
Now we argue as follows. By \eqref{eq:linking} for each $n\in\N$ there exists $\ze_n\in S$ such that $\vphi(\ga_0(\ze_n),n)\in\cL$, hence $a\le  H(\ga_0(\ze_n),n)\le b$. Since $S$ is sequentially compact we have $\ze_n\to\ze\in S$ along a subsequence. It follows that $x:=\ga_0(\ze)\in D_a^b$ satifies $\vphi(x,t)\in D_a^b$ for all $t\ge0$. Now the existence of a critical point of $F$ in $D_a^b$ follows from \eqref{eq:vphi-2}.
\end{proof}

In the proof of Theorems~\ref{thm:N=3} and \ref{thm:N=4} the set $\cL$ will be
\[
\cL_3(\Om):=\{x\in\cF_3\Om: x_1-x_2+r(x_3-x_2)=0 \text{ for some }r>0\},
\]
in case $N=3$, and the set
\[
\cL_4(\Om):=\{x\in\cF_4\Om: x_1-x_2+r(x_3-x_2)=0,\ x_2-x_3+s(x_4-x_3)=0 \text{ for some }r,s>0\}.
\]
in case $N=4$, as in \cite{bartsch-pistoia-weth:2010}. So we need to bound $H$ on these sets.

\begin{Lem}\label{lem:3d}
a) Suppose $N=3$ and \eqref{eq:hyp-1} holds. Then $\sup_{\cL_3\Om} H<\infty$.

b) Suppose $N=4$ and \eqref{eq:hyp-1} and \eqref{eq:hyp-2} hold. Then $\sup_{\cL_4\Om} H<\infty$.
\end{Lem}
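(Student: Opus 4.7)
The plan is to reduce the claim to a finite-dimensional algebraic inequality in the consecutive gaps of the collinear configuration, and then analyze the resulting expression by a case-by-case rescaling.

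By (A1)--(A4) and the continuity of $f$ on $\overline\Om^N$, the non-logarithmic contributions to $H$ are bounded above on $\conf$; in particular, the possible blow-up of $\Ga_i\Ga_j\,g(x_i,x_j)$ when $x_i,x_j$ approach a common boundary point is absorbed, via the decomposition $g(x,y)=\psi(x,y)+\frac{1}{2\pi}\log|\bar x-y|$ of (A4), into bounded-above $\log d(x_i)$ terms coming from the Robin parts and into cross logarithms $\log|\bar x_i-x_j|$, which when combined with the Green's function singularity $\log|x_i-x_j|$ yield ratios $\log\frac{|\bar x_i-x_j|}{|x_i-x_j|}$ under good control. Hence it suffices to show that
\[
L_N(x) := -\frac{1}{\pi}\sum_{1\le i<j\le N}\Ga_i\Ga_j\log|x_i-x_j|
\]
is bounded above on $\cL_N(\Om)$. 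On $\cL_N(\Om)$ the points $x_1,\dots,x_N$ are collinear in their natural order; setting $r=|x_1-x_2|$, $s=|x_2-x_3|$ and (for $N=4$) $t=|x_3-x_4|$, one has $|x_1-x_3|=r+s$, $|x_2-x_4|=s+t$, $|x_1-x_4|=r+s+t$. The signs $\Ga_1\Ga_3,\Ga_2\Ga_4>0$ and $\Ga_1\Ga_4<0$ dictated by \eqref{eq:hyp-1} reduce the problem to showing that
\[
\Psi(r,s,t) := |\Ga_1\Ga_2|\log r + |\Ga_2\Ga_3|\log s + |\Ga_3\Ga_4|\log t - \Ga_1\Ga_3\log(r+s) - \Ga_2\Ga_4\log(s+t) + |\Ga_1\Ga_4|\log(r+s+t)
\]
is bounded above on $(0,D]^3$, where $D$ denotes the diameter of $\Om$ (for $N=3$ drop $t$ and all coefficients carrying a $4$).

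For (a), decompose $\Psi(r,s) = A\log(r+s) + |\Ga_1\Ga_2|\log\frac{r}{r+s} + |\Ga_2\Ga_3|\log\frac{s}{r+s}$, where $A=|\Ga_1\Ga_2|+|\Ga_2\Ga_3|-\Ga_1\Ga_3 = -\sum_{i<j\in\{1,2,3\}}\Ga_i\Ga_j>0$ by \eqref{eq:hyp-1}. Since $r+s\le D$ and the two remaining logarithmic ratios are $\le 0$, $\Psi$ is bounded above. For (b), $\Psi$ is continuous on $(0,\infty)^3$, so it suffices to show that $\Psi\to-\infty$ at every face of $(0,D]^3$ where some coordinate vanishes. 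I would handle five cases: (i) exactly one of $r,s,t$ tends to $0$, driven to $-\infty$ by the matching $|\Ga_i\Ga_{i+1}|\log(\cdot)$ term; (ii) $r,t\to 0$ with $s$ bounded below, driven by $|\Ga_1\Ga_2|\log r+|\Ga_3\Ga_4|\log t$; (iii) $r,s\to 0$ with $t$ bounded below---parametrize $r=\tau u$, $s=\tau(1-u)$ with $\tau\to 0$, $u\in(0,1)$; the coefficient of $\log\tau$ is $|\Ga_1\Ga_2|+|\Ga_2\Ga_3|-\Ga_1\Ga_3>0$ by \eqref{eq:hyp-1} for $I=\{1,2,3\}$, and the residual in $u$ is bounded above on $(0,1)$; (iv) $s,t\to 0$ with $r$ bounded below---analogous, using \eqref{eq:hyp-1} for $I=\{2,3,4\}$; (v) $r,s,t\to 0$ simultaneously---pass to the overall scale $\sigma=r+s+t$ and barycentric coordinates $(\hat r,\hat s,\hat t)$ on the open $2$-simplex, yielding $\Psi=B\log\sigma+R(\hat r,\hat s,\hat t)$ with $B=-\sum_{i<j\in\{1,\dots,4\}}\Ga_i\Ga_j>0$ by \eqref{eq:hyp-1} applied to the whole set, and
\[
R(\hat r,\hat s,\hat t) = |\Ga_1\Ga_2|\log\hat r + |\Ga_2\Ga_3|\log\hat s + |\Ga_3\Ga_4|\log\hat t - \Ga_1\Ga_3\log(\hat r+\hat s) - \Ga_2\Ga_4\log(\hat s+\hat t).
\]
The function $R$ has the same structural form as $\Psi$ itself; iterating the argument of (i)--(iv) on $R$ along each face of the simplex shows $R\to-\infty$ at the boundary, hence $R$ is bounded above on the open simplex, and $\Psi\to-\infty$.

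The main obstacle is case (v), where the three dangerous logarithms $\log(r+s)$, $\log(s+t)$ and $\log(r+s+t)$ compete simultaneously with the three safe $\log r$, $\log s$, $\log t$. The argument succeeds because every reduction to a one-parameter collapse produces a coefficient of $\log$ of the form $-\sum_{i<j\in I}\Ga_i\Ga_j$ for some $I\subset\{1,\dots,N\}$ with $|I|\in\{3,4\}$---precisely the combinations made strictly positive by hypothesis \eqref{eq:hyp-1}. The factorization $\Psi=B\log\sigma+R(\hat r,\hat s,\hat t)$ separates the overall scale (dealt with by the full four-index inequality) from the shape on the simplex (dealt with recursively by the three-index inequalities), which is exactly why the combinatorial content of \eqref{eq:hyp-1} for all subsets $|I|\ge 3$ is the right hypothesis for this bound.
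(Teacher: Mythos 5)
Your algebraic analysis of $\Psi$ on the cube/simplex is correct and is essentially the paper's interior computation in a cleaner packaging (the paper uses the pointwise inequalities \eqref{l31}, \eqref{l41} instead of barycentric rescaling, but the combinatorial content --- the positivity of $-\sum_{i<j\in I}\Ga_i\Ga_j$ for $|I|\ge 3$ --- is used identically). The gap is in the very first step, the reduction of $\sup_{\cL_N\Om}H<\infty$ to $\sup_{\cL_N\Om}L_N<\infty$. The claim that ``the non-logarithmic contributions to $H$ are bounded above on $\conf$'' is false: by (A2) the regular part $g$ is bounded above but \emph{not} below (indeed $g(x,x)=h(x)\to-\infty$ at $\pa\Om$), and for a consecutive pair the term $2\Ga_i\Ga_{i+1}\,g(x_i,x_{i+1})=-2|\Ga_i\Ga_{i+1}|\,g(x_i,x_{i+1})$ tends to $+\infty$ when $x_i,x_{i+1}$ approach a common boundary point. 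More importantly, after you apply (A4) the resulting ratios are \emph{not} individually ``under good control'': in the regime $|x_1^n-x_3^n|=o(d_1^n)$, $d_1^n\to0$ (all three, resp.\ four, points collapsing onto a boundary point faster than their distance to $\pa\Om$), the term $2\Ga_1\Ga_3\log\frac{|x_1^n-\bar x_3^n|}{|x_1^n-x_3^n|}$ carries a positive coefficient and diverges to $+\infty$, since $|x_1^n-\bar x_3^n|\approx 2d_1^n\gg|x_1^n-x_3^n|$; the same happens for the pair $(2,4)$ when $N=4$. Taming these terms is the actual content of the lemma, and it is not implied by the boundedness of $L_N$: it requires the boundary asymptotics of Lemma~\ref{lem:dni}, the collinearity constraints a second time (now applied to the rescaled quantities $|x_i^n-x_j^n|/d_i^n$ via $|x_1^n-\bar x_2^n|\ge d_2^n(1+o(1))$ etc.), and hypothesis \eqref{eq:hyp-1} again to make the resulting exponent $2k_1k_2+2k_2k_3-2k_1k_3$ positive --- this is exactly the final display in the paper's proof of part a) and the $q^n_{i,j}$ computation in part b).

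Your reduction can in fact be repaired: one can check case by case that $H-L_N$ is bounded above on $\cL_N\Om$, but the verification in the boundary-collapse regime produces a leftover coefficient $\sum_i\Ga_i^2+2\sum_{i<j}\Ga_i\Ga_j=\bigl(\sum_i\Ga_i\bigr)^2\ge0$ multiplying $\log d_1^n\le 0$, an identity nowhere invoked in your argument, and the case analysis needed is essentially the one the paper carries out directly on $H$. As written, the one-sentence absorption claim conceals the half of the proof where the hypotheses on the $\Ga_i$ and the structure of $\cL_N\Om$ do their real work near $\pa\Om$.
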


\begin{proof}
We shall prove that if $x^n\in\cL_N\Om$ is such that $x^n\to\pa\cL_N\Om$, then $H(x^n)\to-\infty$. Set $d^n_i:=\dist(x^n_i,\pa\Om)$. As in Section~\ref{sec:com} we drop the notation $n\to\infty$ from all limits, in particular for the terms $O(1)$ and $o(1)$. Without loss of generality we may assume that $\Ga_i=(-1)^ik_i$ with $k_i>0$.

a) The Hamiltonian has the form
\[
H(x)=\sum_{i=1}^3k_i^2h(x_i)-2k_1k_2G(x_1,x_2)+2k_1k_3G(x_1,x_3)-2k_2k_3G(x_2,x_3)+f(x),
\]
and assumption \eqref{eq:hyp-1} reads as
\beq[hyp-1a]
k_1k_2+k_2k_3-k_1k_3>0.
\eeq
Observe that if $x\in \cL_3\Om$ then
\begin{equation}\label{l31}
|x_1-x_3|>\max\{|x_1-x_2|,|x_2-x_3|\}.
\end{equation}

If $d^n_i\ge c>0$ for every $i$ then $h(x^n_i),g(x^n_i,x^n_j)=O(1)$, and $|x^n_i-x^n_j|\to0$ for at least one $i\ne j$. Then \eqref{eq:hyp-1a} and \eqref{l31} imply
\[
\begin{aligned}
H(x^n)
&= \sum_{i=1}^3k_i^2h(x^n_i)-2k_1k_2g(x^n_1,x^n_2)+2k_1k_3g(x^n_1,x^n_3)-2k_2k_3g(x^n_2,x^n_3)\\ &\hspace{1cm}
   +\frac1\pi\log\frac{|x^n_1-x^n_2|^{k_1k_2}|x^n_2-x^n_3|^{k_2k_3}}{|x^n_1-x^n_3|^{k_1k_3}} + f(x^n)\\
& = \frac1\pi\log\frac{|x^n_1-x^n_2|^{k_1k_2}|x^n_2-x^n_3|^{k_2k_3}}{|x^n_1-x^n_3|^{k_1k_3}} + O(1)
 \to -\infty
\end{aligned}
\]

Thus we may assume from now on that $d^n_i\to0$ for some $i$. If in addition
$|x^n_j-x^n_\ell|\ge c>0$ for every $j\ne\ell$ then $H(x^n)\to-\infty$ by (A2) and because $f(x^n)=O(1)$. It follows that we only need to consider the case where $|x^n_j-x^n_\ell|\to0$ for some $j$ and $\ell$. Observe that if only one of $|x^n_1-x^n_2|\to0 $ or $|x^n_2-x^n_3|\to0$ hold then  (A1) and (A2) immediately imply $H(x^n)\to-\infty$. Therefore we may assume that $|x^n_1-x^n_3| \to 0$ and $d^n_i \to 0$ for some $i$, hence $d^n_i \to 0$ for every $i$ because of \eqref{l31}. Thus we are left with the following case:
\[
|x^n_1-x^n_3| \to 0,\ d^n_i \to 0 \text{ for all }i.
\]

If  $\frac{|x^n_1-x^n_3|}{d^n_1}\ge c>0$ Lemma~\ref{lem:dni} (iv) implies $G(x^n_1,x^n_3)=O(1)$, and the claim follows. Therefore it remains to consider the case $|x^n_1-x^n_3|=o(d^n_1)$, hence also $|x^n_1-x^n_3|=o(d^n_3)$. By \eqref{l31} we also have $|x^n_1-x^n_2|=o(d^n_1)$, and $d^n_i/d^n_j\to1$ for all $i,j$. Furthermore we can deduce that $|x^n_1-\bar x^n_2|,|x^n_3-\bar x^n_2|\ge d^n_2=d^n_1(1+o(1))$, and $|x^n_1-\bar x^n_3|\le c d^n_1$. Now (i) and (iii) of Lemma~\ref{lem:dni} yield
\[
\begin{aligned}
2\pi H(x^n)
&= \log\left((d^n_1)^{k_1^2}(d^n_2)^{k_2^2}(d^n_3)^{k_3^2}
     \frac{|x^n_1-x^n_2|^{2k_1k_2}|x^n_1-\bar x^n_3|^{2k_1k_3}|x^n_2-x^n_3|^{2k_2k_3}}
       {|x^n_1-\bar x^n_2|^{2k_1k_2}|x^n_1-x^n_3|^{2k_1k_3}|\bar x^n_2- x^n_3|^{2k_2k_3}}
        \right)
    + O(1)\\
 &\le \log\left((d^n_1)^{k_1^2}(d^n_2)^{k_2^2}(d^n_3)^{k_3^2}
       \frac{|x^n_1-x^n_3|^{2k_1k_2+2k_2k_3-2k_1k_3}|x^n_1-\bar x^n_3|^{2k_1k_3}}
            {(d^n_2)^{2k_1k_2+2k_2k_3}}\right) + O(1)\\
 &\le \log\left(c (d^n_1)^{k_1^2}(d^n_2)^{k_2^2}(d^n_3)^{k_3^2}
                \left(\frac{|x^n_1-x^n_3|}{d^n_1}\right)^{2k_1k_2+2k_2k_3-2k_1k_3}\right) + O(1)
 \to -\infty, \end{aligned}
\]
for some constant $c>0$. For the convergence we used assumption \eqref{eq:hyp-1a} and $|x^n_1-x^n_3|=o(d^n_1)$.

b) Here the Hamiltonian has the form
\[
\begin{aligned}
H(x)
 &= \sum_{i=1}^4 k_i^2h(x_i) - 2k_1k_2G(x_1,x_2) + 2k_1k_3G(x_1,x_3) - 2k_1k_4G(x_1,x_4)\\
 &\hspace{1cm} - 2k_2k_3G(x_2,x_3) + 2k_2k_4G(x_2,x_4) - 2k_3k_4G(x_3,x_4) + f(x).
\end{aligned}
\]
Assumption \eqref{eq:hyp-1} implies
\beq[hyp-1b]
k_1k_2+k_2k_3-k_1k_3>0,\quad k_2k_3+k_3k_4-k_2k_4>0,
\eeq
and assumption \eqref{eq:hyp-2} implies
\beq[hyp-2b]
k_1(k_2 + k_4 - k_3)>0,\quad k_4(k_1 + k_3 - k_2) > 0.
\eeq
For $x\in \cL_4\Om$ there holds
\begin{equation}\label{l41}
\begin{aligned}
&|x_1-x_3| > \max\{|x_1-x_2|,|x_2-x_3|\},\quad |x_2-x_4| > \max\{|x_2-x_3|,|x_3-x_4|\}\\
&|x_1-x_4| > \max\{|x_1-x_3|,|x_2-x_4|\}.
\end{aligned}
\end{equation}

If $|x^n_j-x^n_\ell|\ge c>0$ for every $j\ne\ell$ then $d^n_i\to0$ for some $i$ and $H(x^n)\to-\infty$ as a consequence of (A1) and (A2). If $|x^n_j-x^n_\ell|\to0$ for some $j\ne\ell$ then the only case we have to check is when $|x^n_1-x^n_4|\to0 $ because all the other cases can be treated as in the proof of a). In this case, if $d^n_i\ge c>0$ for every $i$ we have
\[
\begin{aligned}
H(x^n)
&= \frac1\pi\log
 \frac{|x^n_1-x^n_2|^{k_1k_2}|x^n_1-x^n_4|^{k_1k_4}|x^n_2-x^n_3|^{k_2k_3}|x^n_3-x^n_4|^{k_3k_4}}
   {|x^n_1-x^n_3|^{k_1k_3}|x^n_2-x^n_4|^{k_2k_4}} + O(1) \to -\infty,
\end{aligned}
\]
because for some $c>0$
\[
\begin{aligned}
&\frac{|x^n_1-x^n_2|^{k_1k_2}|x^n_1-x^n_4|^{k_1k_4}|x^n_2-x^n_3|^{k_2k_3}|x^n_3-x^n_4|^{k_3k_4}}
      {|x^n_1-x^n_3|^{k_1k_3}|x^n_2-x^n_4|^{k_2k_4}} \\
&\hspace{1cm}
 \le c\left(|x^n_1-x^n_3|^{k_1k_4} + |x^n_3-x^n_4|^{k_1k_4}\right)
        \frac{|x^n_1-x^n_2|^{k_1k_2}|x^n_2-x^n_3|^{k_2k_3}|x^n_3-x^n_4|^{k_3k_4}}
             {|x^n_1-x^n_3|^{k_1k_3}|x^n_2-x^n_4|^{k_2k_4}} \\
&\hspace{1cm}
 \le c{|x^n_1-x^n_3|^{k_1k_2+k_1k_4-k_1k_3}}{|x^n_2-x^n_4|^{k_2k_3+k_3k_4-k_2k_4}}\\
&\hspace{2cm}
      + c{|x^n_1-x^n_3|^{k_1k_2+k_2k_3-k_1k_3}}{|x^n_2-x^n_4|^{k_1k_4+k_3k_4-k_2k_4}}\\
&\hspace{1cm}
 \to 0\,.
\end{aligned}
\]
Here we used \eqref{eq:hyp-1b}, \eqref{eq:hyp-2b}, and \eqref{l41}.

It remains to consider the case when  $|x^n_1-x^n_4|\to0$ and $d^n_i\to0$ for some $i$ which implies $d^n_i\to0$ for every $i$ by \eqref{l41}. If
\beq[13d1]
\frac{|x^n_1-x^n_3|}{d^n_1} \ge c>0
\eeq
and
\beq[24d2]
\frac{|x^n_2-x^n_4|}{d^n_2} \ge c>0
\eeq
Lemma~\ref{lem:dni} (iv) implies $G(x^n_1,x^n_3)=O(1)$ and $G(x^n_2,x^n_4)=O(1)$ and the claim follows. If only one of \eqref{eq:13d1}, \eqref{eq:24d2} is true we argue as in a).

Finally, we are left with the case $|x^n_1-x^n_3|=o(d^n_1)$ and $|x^n_2-x^n_4|=o(d^n_2)$. In this case, it is easy to check that
\begin{equation}\label{ext}
|x^n_1-x^n_4|=o(d^n_1)\quad \hbox{and}\quad\frac{d^n_i}{d^n_j} \to 1.
\end{equation}
Setting $q^n_{i,j} := \frac{|x^n_i-x^n_j|}{|x^n_i-\bar x^n_j|}$, Lemma \ref{lem:dni} (i), (ii) yields
\[
2\pi H(x^n)
 = \log(d^n_1)^{k_1^2}(d^n_2)^{k_2^2}(d^n_3)^{k_3^2}(d^n_4)^{k_4^2}
    + \log\frac{q^n_{1,2}q^n_{1,4}q^n_{2,3}q^n_{3,4}}{q^n_{1,3}q^n_{2,4}} + O(1).
\]
From \eqref{eq:hyp-2b}, \eqref{l41}, \eqref{ext}, we deduce
$d^n_j \le |x^n_i - \bar x^n_j| \le 3d^n_j$, hence $q^n_{1,2} \le \frac{c|x^n_1-x^n_3|}{d^n_1}$ for some $c>0$, and similarly for the other $q^n_{i,j}$. Using this and
$|x^n_1-x^n_4| \le |x^n_1-x^n_3| + |x^n_2-x^n_4|$ we obtain
\[
\begin{aligned}
\frac{q^n_{1,2}q^n_{1,4}q^n_{2,3}q^n_{3,4}}{q^n_{1,3}q^n_{2,4}}
&\le c\left(\frac{|x^n_1-x^n_3|}{d^n_1}\right)^{2k_1k_2-2k_1k_3+2k_1k_4}
       \left(\frac{|x^n_2-x^n_4|}{d^n_2}\right)^{2k_2k_3+2k_3k_4-2k_2k_4} \\
&\hspace{1cm}
 + c\left(\frac{|x^n_1-x^n_3|}{d^n_1}\right)^{2k_1k_2+2k_2k_3-2k_1k_2}
     \left(\frac{|x^n_2-x^n_4|}{d^n_2}\right)^{2k_1k_4+2k_3k_4-2k_2k_4} \\
&\to 0
\end{aligned}
\]
Thus also in this case $H(x^n)\to-\infty$.
\end{proof}

\begin{altproof}{Theorem~\ref{thm:N=3}}
We recall the linking from \cite{bartsch-pistoia-weth:2010}. We assume without loss of generality that $0\in\Om$ and fix $\rho>0$ such that the closed ball $B(0,2\rho)\subset\Om$. Using complex notation for the elements of $\Om\subset\R^2=\C$, we set
\begin{equation}\label{eq:ga0_N=3}
\gamma_0:S^1=\{\zeta\in\C:|\zeta|=1\}\to\cF_3\Om,\quad
\gamma_0(\zeta):=(\rho\zeta,0,2\rho).
\end{equation}
Then \eqref{eq:linking} holds for $S=S^1$, $\ga_0$ from \eqref{eq:ga0_N=3}, and $\cL=\cL_3(\Om)$. This has been proved in \cite[Lemma~6.2]{bartsch-pistoia-weth:2010}. It follows that a $\cC^1$-function $F:\cF_3\Om\to\R$ which is $\cC^1$-close to $H$ in the sense of Proposition~\ref{prop:crit} has a critical point. This proves part a) of  Theorem~\ref{thm:N=3}, part b) is proved easily.
\end{altproof}

\begin{altproof}{Theorem~\ref{thm:N=4}}
For $N=4$ vortices we set
\begin{equation}\label{eq:ga0_N=4}
\gamma_0:S^1\times S^1\to\cF_4\Om, \quad
\gamma_0(\zeta_1,\zeta_2):=(\rho\zeta_1,0,3\rho,3\rho+\rho\zeta_2).
\end{equation}
It has been proved in \cite[Lemma~7.2]{bartsch-pistoia-weth:2010} that \eqref{eq:linking} holds for $S=S^1\times S^1$, $\ga_0$ from \eqref{eq:ga0_N=4}, and $\cL=\cL_4(\Om)$. As above it follows that a $\cC^1$-function $F:\cF_4\Om\to\R$ which is $\cC^1$-close to $H$ in the sense of Proposition~\ref{prop:crit} has a critical point.
\end{altproof}

\section{Proof of Theorem~\ref{thm:euler}}\label{sec:euler}
Following \cite{cao-liu-wei:2013} we prove Theorem~\ref{thm:euler} by constructing streamfunctions $\psi_\eps$ as solutions of the ellipic problem
\beq[pde]
\left\{
\begin{aligned}
-\eps^2\Delta\psi &= \sum_{i=1}^Nf_i\left(\psi+\frac{\Ga_i}{2\pi}\ln\eps\right)
 &&\text{in $\Om$;}\\
\psi &= \psi_0 &&\text{on $\pa\Om$.}
\end{aligned}
\right.
\eeq
with $f_i(t)=t_+^p$ if $\Ga_i>0$, and $f_i(t)=-t_-^p$ if $\Ga_i<0$; here
$t_\pm=\max\{\pm t,0\}$ and $1<p<\frac{N+2}{N-2}$. Setting $u=\frac{2\pi}{|\ln\eps|}(\psi-\psi_0)$ and
$\de=\eps\left(\frac{2\pi}{|\ln\eps|}\right)^{(p-1)/2}$ these are obtained as critical points of the functional $I:H^1_0(\Om)\to\R$ defined by
\beq[def-I]
I(u)
 = \frac{\de^2}{2}\int_\Om |\nabla u|^2
     -\sum_{i=1}^N\int_\Om F_i\left(u-\Ga_i-\frac{2\pi\psi_0(x)}{|\ln\eps|}\right)
\eeq
with $F_i(t)=\int_0^t f_i(s)ds$. Choose $R>0$ such that $\Om\subset\subset B_R(0)$. For $a>0$ let $W_{\de,a}$ be the unique positive solution of
\[
\left\{
\begin{aligned}
-\de^2\Delta w &= (w-a)_+^p &&\text{in $B_R(0)$;}\\
w &= 0 &&\text{on $\pa B_R(0)$,}
\end{aligned}
\right.
\]
and define $W_{\de,x,a}(y):=W_{\de,a}(y-x)$ for $x,y\in\Om$. Finally, let $P:H^1_0(B_R(0))\to H^1_0(\Om)$ be the orthogonal projection, hence $w=PW_{\de,x,a}$ solves
\[
\left\{
\begin{aligned}
-\de^2\Delta w &= (W_{\de,x,a}-a)_+^p &&\text{in $\Om$;}\\
w &= 0 &&\text{on $\pa\Om$.}
\end{aligned}
\right.
\]
Now in order to obtain a solution of \eqref{eq:pde}, for $x\in\conf$ and $a_i>0$ one makes the ansatz
\[
u=\sum_{i=1}^N(\sign\Ga_i) P W_{\de,x_i,a_i} + w_\de
\]
with $w_\de$ a small perturbation. Then a Lyapunov-Schmidt procedure yields
$w_{\de,x}\in H^1_0(\Om)$ with $\|w_{\de,x}\|_\infty=O(\de|\ln\de|^{(p-1)/2})$ and $a_{i,\de}(x)>0$ such that the following holds: If $x\in\conf$ is a critical point of
\[
F_\de(x) := I\left(\sum_{i=1}^N(\sign\Ga_i) PW_{\de,x_i,a_{i,\de}(x)} + w_{\de,x}\right)
\]
then
\beq[spikes]
u_\de = \sum_{i=1}^N(\sign\Ga_i) P W_{\de,x_i,a_{i,\de}(x)} + w_{\de,x}
\eeq
is a critical point of $I$; see \cite[Section~3]{cao-liu-wei:2013}.

By \cite[(4.2), (4.3)]{cao-liu-wei:2013} there holds
\[
F_\de(x) = \al(\de) + \be(\de)H_{KR}(x) + \chi_\de(x)
\]
where $\al(\de)$ and $\be(\de)$ are independent of $x$, and $\chi_\de$ converges to $0$ as $\de\to0$ uniformly in the $\cC^1$-norm on compact sets of $\conf$.

Now Theorems~\ref{thm:N=2}--\ref{thm:N=4} yield for $\de>0$ small critical points $x_\de\in\conf$ of $F_\de$ such that $x_\de\to x^*$ along a subsequence, where $x^*\in\conf$ is a critical point of $H_{KR}$. As a consequence we obtain corresponding critical points
$u_\de$ of $I$ as in \eqref{eq:spikes}, hence solutions $v_\de$ of the Euler equation \eqref{eq:euler}. That the scalar vorticity $\om_\de=\nabla\times v_\de=-\Delta u_\de$ concentrates near $x^*$ follows as in \cite{cao-liu-wei:2013} from the fact that $\Delta PW_{\de,x_i,a_{i,\de}(x)}=0$ if $W_{\de,x_i,a_{i,\de}(x)}<a_i$.

{\sc Address of the authors:}\\[1em]
\begin{tabular}{ll}
 Thomas Bartsch & Angela Pistoia\\
 Mathematisches Institut & Dipartimento di Metodi e Modelli Matematici\\
 University of Giessen & Universit\`{a} di Roma "La Sapienza"\\
 Arndtstr.\ 2 & via A. Scarpa 16\\
 35392 Giessen & 00161 Roma\\
 Germany & Italy\\
 Thomas.Bartsch@math.uni-giessen.de & pistoia@dmmm.uniroma1.it
\end{tabular}

\end{document}